\documentclass[12pt,reqno,twoside]{amsart}
\usepackage{amsthm}
\usepackage{booktabs}

\usepackage{amssymb}
\usepackage{graphics}   
\usepackage{tikz}
\usetikzlibrary{shapes,backgrounds,calc}
\usepackage{latexsym}
\usepackage{multicol}
\usepackage{verbatim,enumerate}
\usepackage{accents}
\usepackage{cite}
\usepackage{enumitem}
\usepackage{array}
\usepackage{mathtools}
\usepackage{dsfont}
\usepackage{nicematrix}

\usepackage[colorlinks=true, linkcolor=blue, citecolor=blue, urlcolor=blue]{hyperref}
\usepackage{hyperref}
\usepackage{amsmath, amscd,url}
\usetikzlibrary{decorations.pathreplacing}

\usepackage{setspace}

\usepackage{pstricks}

\advance\textwidth by 1.5in 
\usepackage{geometry}
\newtheorem{thm}{Theorem}[section]
\newtheorem{cor}[thm]{Corollary}
\newtheorem{lem}[thm]{Lemma}
\newtheorem{prop}[thm]{Proposition}
\theoremstyle{definition}

\newtheorem{defn}[thm]{Definition}

\newtheorem{obs}[thm]{Observation}
\numberwithin{equation}{subsection}

\newcounter{cnt}
 \makeatletter
\def\mydggeometry{\makeatletter\dg@YGRID=1\dg@XGRID=20\unitlength=0.003pt\makeatother}
\makeatother \theoremstyle{remark}

\numberwithin{equation}{section}
\let\bwdg\bigwedge
\def\bigwedge{{\textstyle\bwdg}}

\newcommand{\thmref}[1]{Theorem~\ref{#1}}

\newcommand{\lemref}[1]{Lemma~\ref{#1}}
\newcommand{\propref}[1]{Proposition~\ref{#1}}
\newcommand{\corref}[1]{Corollary~\ref{#1}}

\newcommand{\nc}{\newcommand}
\newcommand{\rnc}{\renewcommand}

\nc{\cal}{\mathcal} \nc{\goth}{\mathfrak} \rnc{\bold}{\mathbf}

\nc\bomega{{\mbox{\boldmath $\omega$}}} \nc\bpsi{{\mbox{\boldmath $\Psi$}}}
 \nc\balpha{{\mbox{\boldmath $\alpha$}}}
 \nc\bpi{{\mbox{\boldmath $\pi$}}}
 \nc\bvpi{{\mbox{\boldmath $\varpi$}}}
\nc\chara{\operatorname{ch}}

  \nc\bxi{{\mbox{\boldmath $\xi$}}}
\nc\bmu{{\mbox{\boldmath $\mu$}}} \nc\bcN{{\mbox{\boldmath $\cal{N}$}}} \nc\bcm{{\mbox{\boldmath $\cal{M}$}}} \nc\blambda{{\mbox{\boldmath
$\lambda$}}}\nc\bnu{{\mbox{\boldmath $\nu$}}}

\makeatletter
\def\section{\def\@secnumfont{\mdseries}\@startsection{section}{1}%
  \z@{.7\linespacing\@plus\linespacing}{.5\linespacing}%
  {\normalfont\scshape\centering}}
\def\subsection{\def\@secnumfont{\bfseries}\@startsection{subsection}{2}%
  {\parindent}{.5\linespacing\@plus.7\linespacing}{-.5em}%
  {\normalfont\bfseries}}
\makeatother

 \nc{\Hom}{\operatorname{Hom}}
  \nc{\mode}{\operatorname{mod}}
\nc{\End}{\operatorname{End}} \nc{\wh}[1]{\widehat{#1}} \nc{\Ext}{\operatorname{Ext}} \nc{\ch}{\text{ch}} \nc{\ev}{\operatorname{ev}}
\nc{\Ob}{\operatorname{Ob}} \nc{\soc}{\operatorname{soc}} \nc{\rad}{\operatorname{rad}} \nc{\head}{\operatorname{head}}

\def\det{\operatorname{det}}

 \nc{\Cal}{\cal} \nc{\Xp}[1]{X^+(#1)} \nc{\Xm}[1]{X^-(#1)}
\nc{\on}{\operatorname} \nc{\Z}{{\bold Z}} \nc{\J}{{\cal J}}  \nc{\Q}{{\bold Q}}

\nc{\N}{{\bold N}}  \nc\boa{\bold a} \nc\bob{\bold b} \nc\boc{\bold c} \nc\bod{\bold d} \nc\boe{\bold e} \nc\bof{\bold f} \nc\bog{\bold g}
\nc\boh{\bold h} \nc\boi{\bold i} \nc\boj{\bold j} \nc\bok{\bold k} \nc\bol{\bold l} \nc\bom{\bold m} \nc\bon{\mathbb n} \nc\boo{\bold o}
\nc\bop{\bold p} \nc\boq{\bold q} \nc\bor{\bold r} \nc\bos{\bold s} \nc\boT{\bold t} \nc\boF{\bold F} \nc\bou{\bold u} \nc\bov{\bold v}
\nc\bow{\bold w} \nc\boz{\bold z}\nc\ba{\bold A} \nc\bb{\bold B} \nc\bc{\mathbb C} \nc\bd{\bold D} \nc\be{\bold E} \nc\bg{\bold
G} \nc\bh{\bold H} \nc\bi{\bold I} \nc\bj{\bold J} \nc\bk{\bold K} \nc\bl{\bold L} \nc\bm{\bold M} \nc\bn{\mathbb N} \nc\bo{\bold O} \nc\bp{\bold
P} \nc\bq{\bold Q} \nc\br{\bold R} \nc\bs{\bold S} \nc\bt{\bold T} \nc\bu{\bold U} \nc\bv{\bold V} \nc\bw{\bold W} \nc\bz{\mathbb Z} \nc\bx{\bold
x} \nc\KR{\bold{KR}} \nc\rk{\bold{rk}} \nc\het{\text{ht }}

\nc\toa{\tilde a} \nc\tob{\tilde b} \nc\toc{\tilde c} \nc\tod{\tilde d} \nc\toe{\tilde e} \nc\tof{\tilde f} \nc\tog{\tilde g} \nc\toh{\tilde h}
\nc\toi{\tilde i} \nc\toj{\tilde j} \nc\tok{\tilde k} \nc\tol{\tilde l} \nc\tom{\tilde m} \nc\ton{\tilde n} \nc\too{\tilde o} \nc\toq{\tilde q}
\nc\tor{\tilde r} \nc\tos{\tilde s} \nc\toT{\tilde t} \nc\tou{\tilde u} \nc\tov{\tilde v} \nc\tow{\tilde w} \nc\toz{\tilde z} \nc\woi{w_{\omega_i}}

\newcommand\wrapped[1]%
{%
	\begin{array}{@{}l@{}}#1\end{array}%
}

\ifTUTeX
  \usepackage{fontspec}
\else
  \usepackage[T1]{fontenc}
  \usepackage[utf8]{inputenc} 
  \DeclareUnicodeCharacter{200B}{{\hskip 0pt}}
\fi

\begin{document}
	
	 \title[The Cycle Rank Threshold]{The Cycle Rank Threshold: Perfect Matchings and Bipartite Parter Graphs}

   \author{G. Arunkumar}
	
	\address{Indian Institute of Technology Madras, Chennai, India.}
	\email{garunkumar@iitm.ac.in}

   \author{Puja Samanta}
   \address{Indian Institute of Technology Madras, Chennai, India.}	\email{ma23d004@smail.iitm.ac.in, pujasamanta1999@gmail.com}




\subjclass[2020]{05C50, 05C76, 05C38, 15A18.}

\begin{abstract}
A graph on \(n\) vertices is called a Parter graph if there exists a nonsingular symmetric matrix, whose nonzero off-diagonal entries correspond exactly to the edges of the graph, such that all of its principal submatrices of order \(n-1\) are singular. Previously, a graph satisfying this condition was said to have property~(P). It was proved that, for bipartite graphs of cycle rank at most \(3\), being a Parter graph is equivalent to the existence of a perfect matching~\cite{puja}. We extend this result to cycle rank \(4\), proving
that every bipartite graph of cycle rank at most \(4\) is a Parter graph if and only if it has a perfect matching. Furthermore, we show that this
bound is sharp by constructing, for every integer \(r\ge5\), a connected
balanced bipartite Parter graph of cycle rank \(r\) that has no perfect
matching.
\end{abstract}

\maketitle

\medskip

\noindent\textbf{Keywords:}
Parter vertex; perfect matching; cycle rank; bipartite graph; Hall's theorem.

\section{Introduction}

Throughout the paper, all graphs are assumed to be simple and all matrices
are assumed to be real and symmetric. We use the standard graph-theoretic
terminology of \cite{west}. For a symmetric matrix \(A=(a_{ij})\in\mathbb{R}^{n\times n}\), its
\emph{support graph}, denoted by \(G(A)\), is the graph with vertex set
\([n]=\{1,\ldots,n\}\) and edge set
\[
E(G(A))=\bigl\{ij:i\neq j,\ a_{ij}\neq0\bigr\}.
\]
Thus, the edges of \(G(A)\) are determined by the nonzero off-diagonal
entries of \(A\), while the diagonal entries do not affect \(G(A)\).
 For a graph \(G\) on \(n\) vertices, let
\[
S(G)=\{A\in\mathbb{R}^{n\times n}:A^\top=A,\; G(A)=G\},
\]
where \(A^\top\) denotes the transpose of \(A\).

An important question is how structural properties of \(G\) influence the spectral properties of matrices in \(S(G)\), particularly under vertex deletion.
The notion of a Parter vertex originates in the seminal work of
S. Parter~\cite{Parter} in 1960, who investigated how the structure of a tree
influences the multiplicities of eigenvalues of associated real symmetric
matrices. To define this concept, let \(A\in S(G)\), and for a vertex \(i\in V(G)\), let \(A(i)\)
denote the principal submatrix obtained by deleting the row and column
indexed by \(i\) (similarly, for a nonempty index set $\alpha\subseteq[n]$, let $A(\alpha)$ denote the principal submatrix obtained by deleting the rows and columns indexed by $\alpha$). If \(m_A(\lambda)\) denotes the multiplicity of an
eigenvalue \(\lambda\) of \(A\), then Cauchy's interlacing theorem \cite[Theorem 4.3.17]{inter}
implies that
\[
-1\le m_{A(i)}(\lambda)-m_A(\lambda)\le 1.
\]
A vertex \(i\in V(G)\) is called a \emph{Fiedler vertex} of \(A\) with
respect to an eigenvalue \(\lambda\) if
\(
m_{A(i)}(\lambda)\ge m_A(\lambda).
\)
When \(\lambda=0\), it is called an \emph{F-vertex} \cite{Fiedler}. A Fiedler vertex for
which
\[
m_{A(i)}(\lambda)=m_A(\lambda)+1
\]
is called a \emph{Parter vertex} with respect to \(\lambda\); in particular,
when \(\lambda=0\), it is called a \emph{P-vertex}~\cite{kim1}. A Fiedler
vertex for which
\(
m_{A(i)}(\lambda)=m_A(\lambda)
\)
is called a \emph{neutral vertex}, while a vertex satisfying
\(
m_{A(i)}(\lambda)=m_A(\lambda)-1
\)
is called a \emph{downer vertex}~\cite{hermitian}.
We denote by \(P_\nu(A)\) the number of
P-vertices of \(A\). A graph \(G\) of order \(n\) is said to have the \emph{full P-vertex property} if there exists a matrix \(A\in S(G)\) such that
\(
P_\nu(A)=n.
\)
Johnson and Sutton~\cite{hermitian} proved that every singular acyclic matrix of order \(n\) has at most \(n-2\) P-vertices. Kim and Shader~\cite{kim1} later showed that this bound does not hold for nonsingular acyclic matrices. In particular, they proved that the path graph \(P_n\) has the full P-vertex property if and only if \(n\) is even. Since then, the full P-vertex property has been extensively studied for several classes of graphs, including trees, cycles, and other graph families; see, for example, \cite{fons1,fons2,fons3,fons4,fons5,fons6,du1,du2,du3,du4,du5,cruz}.

Motivated by these developments, Howlader, Raickwade, and Sivakumar~\cite[Definition~1.1]{Howlader}
reformulated the full P-vertex property for nonsingular matrices, introducing
it under the terminology ``property (P)'', and investigated this concept for
unicyclic graphs. Since a P-vertex is precisely a Parter vertex with respect
to the eigenvalue \(0\), we adopt the following more descriptive terminology.
\begin{defn}[Parter graph]
A graph \(G\) is called a \emph{Parter graph} if there exists a nonsingular
matrix \(A\in S(G)\) such that every vertex of \(G\) is a P-vertex of \(A\).
Such a matrix \(A\) is called a \emph{Parter realization} of \(G\).
\end{defn}
Thus, a Parter graph is exactly a graph having property~(P) in the
terminology of \cite{Howlader,sharma,puja,APP}. Equivalently, \(G\) is a
Parter graph if there exists a nonsingular \(A\in S(G)\) such that
\(
\det A(v)=0\ \text{for every }v\in V(G),
\)
or, equivalently,
\[
\operatorname{diag}(A^{-1})=\mathbf 0.
\]
We use the terminology Parter graph throughout the remainder of the paper.

Recall that a perfect matching is a spanning forest in which every connected component is an edge.
Recently, Sharma and Panda~\cite{sharma} established a fundamental
connection between Parter graphs and perfect matchings in bipartite graphs.
\begin{thm}\cite[Theorem 3.1]{sharma} \label{bipartite perfect matching theorem}
Let $G$ be a connected bipartite graph. If $G$ has a perfect matching, then
$G$ is a Parter graph.
\end{thm}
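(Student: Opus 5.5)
The natural approach is to build the Parter realization explicitly in bipartite block form with zero diagonal. Let \(G\) have bipartition \(X\cup Y\). Since \(G\) has a perfect matching, \(|X|=|Y|=m\). Order the vertices so that the matching pairs \(x_i\) with \(y_i\). We look for
\[
A=\begin{pmatrix}0 & B\\ B^\top & 0\end{pmatrix},
\]
where \(B\) is an \(m\times m\) real matrix with \(b_{ij}\neq0\) exactly when \(x_iy_j\in E(G)\). Then \(A\in S(G)\), and \(A\) is nonsingular if and only if \(B\) is nonsingular. In that case
\[
A^{-1}=\begin{pmatrix}0 & (B^\top)^{-1}\\ B^{-1} & 0\end{pmatrix},
\]
so \(\operatorname{diag}(A^{-1})=\mathbf 0\) holds automatically. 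By the equivalent formulation stated after the definition, every vertex is then a P-vertex. The whole proof therefore reduces to the following claim: a bipartite graph with a perfect matching admits a nonsingular \(B\) with the prescribed zero pattern.

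For nonsingularity, write \(\det B\) as a polynomial in the nonzero entries \(b_{ij}\), treated as indeterminates indexed by the edges. Its expansion is \(\sum_\sigma \operatorname{sgn}(\sigma)\prod_i b_{i\sigma(i)}\), where the sum runs over those permutations \(\sigma\) for which every \(x_iy_{\sigma(i)}\) is an edge. These permutations correspond exactly to the perfect matchings of \(G\). Distinct matchings give distinct monomials, so there is no cancellation. The perfect matching we fixed contributes a nonzero monomial, so \(\det B\) is a nonzero polynomial.

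A nonzero polynomial does not vanish on all of \((\mathbb R\setminus\{0\})^{|E|}\), since that set is Zariski dense. Hence generic nonzero values of the \(b_{ij}\) give a nonsingular \(B\). For a concrete choice we can instead set \(b_{ii}=t\) on the matching edges and \(b_{ij}=1\) on all other edges. Then \(\det B\) is a monic polynomial in \(t\) of degree \(m\), so it is nonzero for all sufficiently large \(t\).

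Connectedness of \(G\) is not actually needed in this argument; it only matches the hypothesis of the cited result. The one step requiring care is checking that a zero diagonal is permitted: \(S(G)\) constrains only the off-diagonal entries, so it is. The only substantive point is the non-cancellation argument for \(\det B\), and that argument is standard.
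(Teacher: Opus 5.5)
The paper does not prove this statement; it quotes it from Sharma and Panda~\cite{sharma}, so there is no in-paper argument to match. Your proof is correct and complete as it stands. With zero diagonal blocks, $\det A=(-1)^m(\det B)^2$, and the block-antidiagonal form of $A^{-1}$ makes its diagonal vanish automatically. A nonsingular $B$ with nonzero entries exactly on the edges then comes from either of your two arguments: the non-cancellation in the permutation expansion, or the explicit choice $B=tI+N$, whose determinant is the characteristic polynomial of $-N$ evaluated at $t$.

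The nearest comparison in the paper is the construction for $G_{p,q}$ in Theorem~\ref{thm:general-family}. There the diagonal blocks are $\sqrt{\lambda}\,I$, and the authors must check by explicit block computation that $R^{-1}=(\lambda I-BB^\top)^{-1}$ and $S^{-1}$ have zero diagonal. Your argument explains why that extra work cannot be avoided there. A square pattern matrix $B$ can be nonsingular only if some term of its permutation expansion survives, that is, only if $G$ has a perfect matching. For $G_{p,q}$, therefore, every $B$ with the prescribed pattern is singular, and the zero-diagonal-block ansatz must fail.

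For graphs with a perfect matching your route is the natural and simpler one. As you note, it does not use connectedness, so it gives the extension to disconnected bipartite graphs directly, without the componentwise lemma of \cite{Howlader}.
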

By \cite[Lemma~2.1]{Howlader}, the property of being a Parter graph is componentwise. Since the existence of a perfect matching is also componentwise, the preceding theorem extends to arbitrary bipartite graphs. A graph that is both bipartite and a Parter graph will be referred to as a bipartite Parter graph.
The converse of Theorem~\ref{bipartite perfect matching theorem} is the
main question addressed in this paper. It is known that the converse holds
for bipartite graphs of cycle rank at most $3$.

\begin{thm}\cite[Theorem~3.8]{puja}\label{thm:cyclerank3}
Let $G$ be a bipartite graph with cycle rank $m(G)\le 3$. Then $G$ is a Parter graph if and only if $G$ has a perfect matching.
\end{thm}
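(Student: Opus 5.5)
The statement is the cited Theorem~\ref{thm:cyclerank3}. The direction ``perfect matching implies Parter'' is immediate from Theorem~\ref{bipartite perfect matching theorem}, applied componentwise via \cite[Lemma~2.1]{Howlader}. So the plan concentrates on the converse. Let $G$ be a bipartite Parter graph with $m(G)\le 3$ and Parter realization $A$. Since both properties are componentwise, we may assume $G$ is connected with bipartition $(X,Y)$.

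\textbf{Step 1: reduce to a zero-diagonal-style determinant expansion and balance.} Order $A$ by the bipartition and write it as $\begin{pmatrix} D_1 & B\\ B^\top & D_2\end{pmatrix}$. Nonsingularity gives $\det A\neq 0$. Expand $\det A$ and each $\det A(v)$ as sums over spanning subgraphs whose components are vertices (loops), edges, and cycles. Since $G$ is bipartite, all cycles are even. The goal is to show that if $G$ has no perfect matching, then the conditions $\det A(v)=0$ for all $v$ together with $\det A\ne0$ are inconsistent. First I would show $|X|=|Y|$. The idea is to use $\operatorname{diag}(A^{-1})=\mathbf 0$: then $A^{-1}$ has zero diagonal, so $\operatorname{tr}(A^{-1})=0$, and a rank or inertia argument on the blocks should force balance. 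If that fails, I would appeal directly to the earlier lemmas of \cite{puja} on balance.

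\textbf{Step 2: Hall deficiency.} Assume $G$ is balanced but has no perfect matching. By Hall's theorem there is $S\subseteq X$ with $|N(S)|<|S|$. Take $S$ minimal, or take the König--Egerváry structure via the Dulmage--Mendelsohn decomposition. Then every term in the expansion of $\det A$ must use at least one diagonal entry of a vertex in $S$, or a cycle structure that escapes this constraint. Counting cycles is where cycle rank enters: with $m(G)\le 3$, the number of even cycles, and of pairwise disjoint cycle families, is small. This lets us enumerate the ways the vertices of $S\cup N(S)$ can be covered.

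\textbf{Step 3: derive the contradiction.} Consider the vertices $v\in S$. The relations $\det A(v)=0$ say that the terms avoiding $v$ cancel. Combining these with the Laplace-type identity $\det A=\sum_{u} a_{vu}(\text{cofactor})$ along a row of $S$, I would show that $\det A$ equals a combination of the vanishing quantities $\det A(v)$, hence $\det A=0$, a contradiction. Concretely, since the rows of $S$ are supported on only $|N(S)|+|S|$ columns while $|N(S)|<|S|$, the submatrix of rows $S$ has its off-diagonal part of rank less than $|S|$. Its nonsingularity therefore hinges entirely on the diagonal entries $a_{vv}$, $v\in S$. The zero-diagonal-inverse condition, via the Jacobi complementary minor identity, forces these diagonal contributions to cancel when the cycle structure is tree-like or has at most three independent cycles.

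\textbf{Main obstacle.} The hardest step is the case analysis over the possible configurations of the at most three independent cycles relative to $S\cup N(S)$ (theta graphs, disjoint cycles, and so on). There, cycles through the deficient set can supply terms of $\det A$ that avoid diagonal entries of $S$. I would first handle trees (known from Kim--Shader-type results), then unicyclic graphs \cite{Howlader}, and then add cycles one at a time via pendant-path and edge-deletion reductions. I would use the fact that a pendant vertex must be matched to its neighbour, which lets us delete both and induct on the number of vertices.
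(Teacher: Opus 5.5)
There is a genuine gap. Steps~2 and~3, which carry the entire converse, are not an argument.

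The fact that every term of $\det A$ contains some diagonal entry $a_{vv}$ with $v\in S$ is true. Equivalently, $A[S,N(S)]$ has rank less than $|S|$. But this gives no contradiction by itself. Your claims that $\det A$ is a combination of the $\det A(v)$, or that Jacobi's identity ``forces the diagonal contributions to cancel'' when $m(G)\le 3$, come with no mechanism. Nowhere do you actually use the cycle-rank bound. Since $G_{3,2}$ (Theorem~\ref{thm:general-family}) is a balanced bipartite Parter graph of cycle rank $5$ with a Hall violator, any valid proof must show precisely where $m\le 3$ enters.

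Your fallback case analysis is also unsound as planned, because the Parter property is not inherited by $G-e$. Deleting $x_2v_2$ from $G_{3,2}$ leaves the bridge $x_1v_1$. Its sides $K_{3,2}$, $K_{2,3}$ and its reduced sides $K_{3,1}$, $K_{1,3}$ are all unbalanced, so the result is not Parter by Proposition~\ref{prop:bridge-property} and Theorem~\ref{thm:balanced}. So you cannot lower the cycle rank one edge at a time.

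The missing idea is that, once pendant vertices are gone, the contradiction is purely combinatorial. Your pendant reduction is the right first move. What it needs, though, is Theorem~\ref{lem:pendant_heredity} ($G-\{u,v\}$ is again Parter), not just the observation that $u$ must be matched to $v$. Induction on $|V(G)|$ together with Lemma~\ref{lem:isolated_vertex} then reduces to the case $\delta(G)\ge 2$.

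Now take a component $H$, balanced with $|U|=|V|=n$ by Theorem~\ref{thm:balanced}, and a Hall violator $U_1$ with $|U_1|=r>k=|N(U_1)|$. Count edges in three disjoint sets:
\begin{itemize}
\item the minimum degree gives at least $2r$ edges between $U_1$ and $N(U_1)$;
\item every vertex of $V\setminus N(U_1)$ has all its neighbours in $U\setminus U_1$, giving at least $2(n-k)$ more;
\item connectivity gives at least one edge between $U\setminus U_1$ and $N(U_1)$.
\end{itemize}
Hence $|E(H)|\ge 2n+2(r-k)+1\ge 2n+3$, so $m(H)=|E(H)|-2n+1\ge 4$ by Theorem~\ref{thm:CR}, contradicting $m\le 3$.

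The paper only cites the present statement, but this is exactly the count in its proof of Theorem~\ref{thm:cyclerank-4}. At $m=4$ the count leaves the extremal case $r-k=1$ with a bridge, which is where Proposition~\ref{prop:bridge-property} is needed. For $m\le 3$ the count alone finishes. No determinant expansion, Jacobi identity, or enumeration of cycle configurations is required.
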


We establish the corresponding result for the next cycle rank
case.

\begin{thm}\label{thm:cyclerank-4}
Let \(G\) be a bipartite graph with cycle rank \(m(G)=4\). Then \(G\) is a
Parter graph if and only if \(G\) has a perfect matching.
\end{thm}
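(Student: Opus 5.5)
One direction is already available: if \(G\) has a perfect matching, then \(G\) is a Parter graph by Theorem~\ref{bipartite perfect matching theorem}. This applies componentwise via \cite[Lemma~2.1]{Howlader}. The work is the converse. We suppose \(G\) is a bipartite Parter graph with \(m(G)=4\) and no perfect matching, and derive a contradiction.

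We first reduce to a connected minimal situation. Since both properties are componentwise and cycle ranks add over components, a component without a perfect matching has cycle rank at most \(4\). If that rank is at most \(3\), Theorem~\ref{thm:cyclerank3} already gives the contradiction. So we may assume \(G\) is connected with \(m(G)=4\).

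Let \(G\) have bipartition \((X,Y)\) and let \(A\) be a Parter realization. Order the vertices so that
\[A=\begin{pmatrix} D_1 & B\\ B^\top & D_2\end{pmatrix}.\]
Nonsingularity of \(A\) forces \(|X|=|Y|\); otherwise \(\operatorname{rank}\) considerations combined with \(\operatorname{diag}(A^{-1})=0\) fail, and we would check this via the known lemmas of \cite{puja}. By Hall's theorem, there is a set \(S\subseteq X\) with \(|N(S)|<|S|\). Take a maximal such deficiency, i.e.\ a Gallai--Edmonds/König type decomposition. This yields a partition in which the submatrix \(B[S,Y\setminus N(S)]=0\).

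Expanding \(\det A\) and each \(\det A(v)\) combinatorially, as sums over spanning subgraphs whose components are edges, cycles, and loops, a Hall-violating set forces every nonzero term to use diagonal entries (loops) on \(S\) or \(N(S)\). The conditions \(\det A(v)=0\) for all \(v\) then become polynomial identities in the diagonal entries and the edge weights. The strategy of \cite{puja} is to show that these identities force \(\det A=0\) whenever the number of independent cycles is small. We would mimic this: contract or prune pendant structures. A pendant vertex \(u\) attached to \(w\) forces a standard reduction (delete \(u,w\), keeping the Parter property on the remainder, as in \cite{Howlader}), which preserves bipartiteness, matching-deficiency, and does not increase cycle rank. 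So we may assume minimum degree at least \(2\). Then \(G\) is a subdivision of a multigraph with at most \(2m-2=6\) branch vertices and \(3m-3=9\) branch paths. Long induced paths of degree-\(2\) vertices can also be shortened by \(2\) using a path-reduction lemma for Parter realizations, which preserves bipartiteness and parity. This leaves finitely many base configurations: cubic-like cores on \(\le 6\) branch vertices with subdivision lengths \(\le 2\) or so.

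The main obstacle is this finite but large case analysis of cycle-rank-\(4\) cores that are unbalanced in Hall's sense yet balanced overall. For each such core, we must show the system \(\operatorname{diag}(A^{-1})=0\) is incompatible with \(\det A\ne0\). I would first try a uniform argument. For a Hall violator \(S\) with \(|N(S)|=|S|-1\), the submatrix \(A[S\cup N(S)]\) interacts with the rest only through edges from \(N(S)\) outward. Using the Schur complement and the cofactor identity \(\sum_{v}\det A(v)\)-type relations (Jacobi's identity on \(A^{-1}\) restricted to \(S\)), we would show that \(A^{-1}[S]\) has zero diagonal and rank constraints that force a dependency whenever the edges leaving \(N(S)\) carry fewer than \(5\) independent cycles. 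Only if this uniform argument fails would I fall back on case-checking the finitely many reduced cores, by computer-assisted determinant expansion. The sharpness examples at \(r=5\) suggest exactly where the count breaks.
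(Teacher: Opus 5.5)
Your opening reductions are sound and coincide with the paper's. These are sufficiency from Theorem~\ref{bipartite perfect matching theorem}, passing to a component without a perfect matching (balanced by Theorem~\ref{thm:balanced}), taking a Hall violator, and using Theorem~\ref{lem:pendant_heredity} with induction on $|V(G)|$ (and Theorem~\ref{thm:cyclerank3} when the cycle rank drops) to reach $\delta\ge 2$. After that point, however, the proposal contains no argument.
\begin{itemize}
\item The path-reduction lemma is asserted, not proved. You would need the direction ``$G$ Parter $\Rightarrow$ shortened graph Parter''. The obvious Schur-complement proof only works when the $2\times 2$ block on the two degree-$2$ vertices is invertible, and the shortening can create a parallel edge.
\item The ``uniform argument'' via Jacobi's identity is never actually formulated.
\item The fallback is not carried out. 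It asks, for each core, whether some real matrix with the given pattern has $\operatorname{diag}(A^{-1})=0$ and $\det A\neq 0$. That is a real-algebraic feasibility problem, not a determinant expansion.
\end{itemize}
So nothing you wrote excludes a counterexample with $\delta\ge 2$.

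The missing idea is that once $\delta\ge 2$, an elementary edge count does all the combinatorial work, and no core enumeration is needed. Let the component have parts $U,V$ with $|U|=|V|=n$, let $U_1$ be a Hall violator, and set $V_1=N(U_1)$, $U_2=U\setminus U_1$, $V_2=V\setminus V_1$ and $d=|U_1|-|V_1|\ge 1$. Each vertex of $U_1$ has at least two neighbours, all in $V_1$. Each vertex of $V_2$ has at least two neighbours, all in $U_2$. Connectivity gives at least one $U_2$--$V_1$ edge. Hence $|E|\ge 2n+2d+1$, whereas Theorem~\ref{thm:CR} gives $|E|=2n-1+m\le 2n+3$. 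This forces $d=1$, $m=4$, and exactly one edge between $U_1\cup V_1$ and $U_2\cup V_2$. That edge is a bridge, both of whose sides have imbalance $1$, and its endpoints lie in the smaller part of each side.

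The only matrix input then needed is a bridge lemma (Proposition~\ref{prop:bridge-property}). Write $A$ in block form across a bridge $xy$ with sides $G_1\ni x$ and $G_2\ni y$. Then $\det A=\det A_1\det A_2-a^2\det A_1(x)\det A_2(y)$, together with $\det A(x)=\det A_1(x)\det A_2$ and $\det A(y)=\det A_1\det A_2(y)$. From these, either $G_1$ and $G_2$ are both Parter graphs, or $G_1-x$ and $G_2-y$ are. All four graphs are unbalanced, contradicting Theorem~\ref{thm:balanced}. Your remark that $S\cup N(S)$ meets the rest only through edges leaving $N(S)$ points in this direction. The crux you lack is that under $\delta\ge 2$ and $m\le 4$ there is exactly one such edge.
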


The proof of Theorem~\ref{thm:cyclerank-4} relies on the following structural result describing how a Parter graph behaves when a bridge is deleted. Throughout this paper, \(G-x\) denotes the graph obtained from \(G\) by deleting the vertex \(x\) together with all edges incident to \(x\), \(G-\{x,y\}\) denotes the graph obtained by deleting the vertices \(x\) and \(y\), and \(G-xy\) denotes the graph obtained by deleting the edge \(xy\). It is known that if two Parter graphs are joined by a single edge, the resulting graph is also a Parter graph~\cite{sharma} (see Theorem~\ref{thm:bridge-preservation}). We formally establish the converse in the following proposition.
\begin{prop}\label{prop:bridge-property}
Let $G$ be a connected Parter graph, and let $xy$ be a bridge of $G$.
Let $G_1$ and $G_2$ be the components of $G-xy$, where
$x\in V(G_1)$ and $y\in V(G_2)$. Then one of the following holds:
\begin{enumerate}
\item both $G_1$ and $G_2$ are Parter graphs;
\item both $G_1-x$ and $G_2-y$ are Parter graphs.
\end{enumerate}
\end{prop}

Furthermore, in Section~\ref{sec:sharpness}, we show that the converse of Theorem~\ref{bipartite perfect matching theorem} does not hold in general: a connected bipartite Parter graph need not possess a perfect matching. In particular, we demonstrate that the cycle rank bound of \(4\) is sharp. We achieve this by introducing a family of connected balanced bipartite graphs, denoted \(G_{p,q}\), which lack a perfect matching. In \thmref{thm:general-family}, we explicitly construct a block matrix that serves as a Parter realization of \(G_{p,q}\). Building upon this algebraic construction, we establish that for \emph{every} integer \(r \ge 5\), there exists a connected balanced bipartite Parter graph of cycle rank \(r\) that has no perfect matching. Finally, we show that \(G_{3,2}\) is, up to isomorphism, the unique
counterexample of order \(10\) and cycle rank \(5\) to the converse.

\section{Preliminaries}

We begin by recalling some standard terminology and known results for the reader's convenience.
\subsection{Cycle rank and perfect matchings}

The \emph{cycle space} of a graph \(G\), denoted by \(\mathcal{C}(G)\), is the vector space over the field \(\mathbb{F}_2=\{0,1\}\) spanned by the edge sets of all cycles in \(G\) \cite[p.~23--24]{diestel}.

\begin{defn}[{\cite[p.~24]{diestel}, \cite[p.~39]{harary}}]
The \emph{cycle rank} (or \emph{cyclomatic number}) of a graph \(G\), denoted by \(m(G)\), is the dimension of its cycle space.
\end{defn}

The following well-known formula will be used throughout.

\begin{thm}\cite[Theorem~4.5(b)]{harary}\label{thm:CR}
For every graph \(G\),
\[
m(G)=|E(G)|-|V(G)|+k(G),
\]
where \(k(G)\) denotes the number of connected components of \(G\).
\end{thm}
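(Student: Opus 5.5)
We prove the formula \(m(G)=|E(G)|-|V(G)|+k(G)\) by computing the dimension of the cycle space \(\mathcal{C}(G)\) over \(\mathbb{F}_2\) as the kernel of a linear map.

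\textbf{Reduction to connected graphs.} Both sides are additive over connected components. The cycle space of \(G\) is the direct sum of the cycle spaces of its components, since every cycle lies in a single component. The quantities \(|E|\), \(|V|\) and \(k\) are likewise additive. So it suffices to show \(m(G)=|E|-|V|+1\) for connected \(G\).

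\textbf{Identifying the cycle space.} Let \(\partial:\mathbb{F}_2^{E}\to\mathbb{F}_2^{V}\) be the boundary map sending each edge \(uv\) to \(u+v\). We claim \(\mathcal{C}(G)=\ker\partial\).

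\begin{itemize}
\item Every cycle has even degree at each vertex, so \(\mathcal{C}(G)\subseteq\ker\partial\).
\item Conversely, a nonempty edge set \(F\in\ker\partial\) has all degrees even, so the subgraph it spans has minimum degree at least \(2\) and therefore contains a cycle \(C\).
\item The set \(F+C\) again lies in \(\ker\partial\) and is strictly smaller than \(F\). By induction on \(|F|\), \(F\) is a sum of cycles.
\end{itemize}

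\textbf{Rank of \(\partial\).} For connected \(G\), the image of \(\partial\) is contained in the space of vectors with coordinate sum \(0\), which has dimension \(|V|-1\). Equality holds because \(u+v\) lies in the image for every pair of vertices \(u,v\): sum the edges along a \(u\)–\(v\) path.

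By rank–nullity,
\[
\dim\ker\partial=|E|-(|V|-1).
\]
This gives \(m(G)=|E|-|V|+1\) for connected \(G\), and summing over components yields the general formula.

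The only step needing care is the converse inclusion \(\ker\partial\subseteq\mathcal{C}(G)\). It rests on the standard fact that a graph with all degrees at least \(2\) contains a cycle: walk along a longest path and use a second edge at its endpoint.
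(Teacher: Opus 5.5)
Your proof is correct. The paper gives no proof of its own here; it simply quotes the formula from Harary. The classical argument for it goes through a spanning forest \(T\) instead:
\begin{itemize}
\item \(T\) has \(|V(G)|-k(G)\) edges, so there are \(|E(G)|-|V(G)|+k(G)\) chords.
\item Each chord closes a unique fundamental cycle in \(T\).
\item These cycles are independent, because each one contains its own chord and no other.
\item They span, because adding to any element of \(\mathcal{C}(G)\) the fundamental cycles of its chords leaves an even-degree edge set inside a forest, and such a set must be empty.
\end{itemize}

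Your route replaces this explicit basis with rank--nullity for the \(\mathbb{F}_2\) boundary map \(\partial\). You never exhibit a basis. In exchange, you get two facts for free: the identification \(\mathcal{C}(G)=\ker\partial\) (the cycle space is exactly the space of even-degree edge sets), and the rank \(|V(G)|-k(G)\) of the incidence matrix over \(\mathbb{F}_2\).

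Both proofs rest on the same combinatorial input: a nonempty edge set with all degrees even contains a cycle. You correctly identify this as the only delicate step.

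A minor simplification: the reduction to connected graphs is optional. The image of \(\partial\) is exactly the set of vectors whose coordinates sum to zero on each component, and that space has dimension \(|V(G)|-k(G)\). Rank--nullity then gives the general formula in one step.
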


We rely on the following standard characterization of perfect
matchings. For a bipartite
graph \(G(U,V;E)\) and a subset \(S\subseteq U\), let
\(
N(S):=\{v\in V:\exists\,u\in S\text{ such that }uv\in E\}
\)
denote the set of neighbors of \(S\). 

\begin{thm}[Hall's theorem, {\cite[Theorem~3.1.11]{west}}]\label{thm:hall}
A bipartite graph \(G(U,V;E)\), with \(|U|=|V|\), has a perfect matching
if and only if
\[
|N(S)|\ge |S|
\quad \text{for every }S\subseteq U.
\]
\end{thm}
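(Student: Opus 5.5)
Since Hall's theorem is only cited here, I would give the standard self-contained argument: necessity directly, sufficiency by strong induction on \(n=|U|=|V|\).

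\emph{Necessity.} Suppose \(M\) is a perfect matching. For any \(S\subseteq U\), the partners of \(S\) under \(M\) are \(|S|\) distinct vertices, and all of them lie in \(N(S)\). Hence \(|N(S)|\ge|S|\).

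\emph{Sufficiency.} Assume Hall's condition holds. The case \(n=1\) is trivial, since \(N(U)\neq\emptyset\) forces the single edge. For \(n\ge2\) I would split into two cases, according to whether some nonempty proper subset of \(U\) is \emph{tight}, meaning \(|N(S)|=|S|\).

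\emph{Case 1: no tight set.} Then \(|N(S)|\ge|S|+1\) for every nonempty proper \(S\subsetneq U\).
\begin{enumerate}
\item Pick any edge \(uv\); one exists because \(N(U)\neq\emptyset\).
\item Form \(G'=G-\{u,v\}\). Its sides have \(n-1\) vertices each.
\item For nonempty \(S\subseteq U\setminus\{u\}\), the neighborhood of \(S\) in \(G'\) is \(N(S)\setminus\{v\}\). Its size is at least \(|S|+1-1=|S|\), because \(S\) is a nonempty proper subset of \(U\).
\item So \(G'\) satisfies Hall's condition. By induction it has a perfect matching, and adding \(uv\) gives one for \(G\).
\end{enumerate}

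\emph{Case 2: a tight set exists.} Fix a nonempty \(S\subsetneq U\) with \(|N(S)|=|S|\). Consider the two induced subgraphs
\[
G_1=G[S\cup N(S)],\qquad G_2=G[(U\setminus S)\cup(V\setminus N(S))].
\]
Both are balanced: \(G_1\) by tightness, and \(G_2\) because \(|U|=|V|\). Both are strictly smaller than \(G\), so I would verify Hall's condition in each and apply induction.
\begin{enumerate}
\item \(G_1\) satisfies Hall's condition: for \(T\subseteq S\), every neighbor of \(T\) already lies in \(N(S)\), so its neighborhood in \(G_1\) is exactly \(N(T)\).
\item \(G_2\) satisfies Hall's condition: let \(T\subseteq U\setminus S\). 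Its neighborhood in \(G_2\) is \(N(T)\setminus N(S)\). Since \(N(T\cup S)\) is the disjoint union of \(N(S)\) and \(N(T)\setminus N(S)\), Hall's condition in \(G\) gives
\[
|N(T)\setminus N(S)|=|N(T\cup S)|-|N(S)|\ge |T|+|S|-|S|=|T|.
\]
\item The union of the two perfect matchings from induction is a perfect matching of \(G\).
\end{enumerate}

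The only step needing real care is the verification for \(G_2\) in Case 2. This is where tightness \(|N(S)|=|S|\) is essential, and where the bookkeeping of deleted neighborhoods must be done exactly. Everything else is routine.
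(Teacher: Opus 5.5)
Your proof is correct. The surplus \(|N(S)|\ge|S|+1\) in Case~1 is exactly what absorbs the deletion of \(v\). In Case~2, the identity \(|N(T)\setminus N(S)|=|N(T\cup S)|-|N(S)|\) together with tightness gives Hall's condition in \(G_2\). Both pieces are strictly smaller because \(S\) is nonempty and proper.

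The paper gives no proof of its own; it quotes the result from \cite[Theorem~3.1.11]{west}, where sufficiency is proved by a different route. There one takes a maximum matching \(M\), supposes some \(u\in U\) is \(M\)-unsaturated, and lets \(S\subseteq U\), \(T\subseteq V\) be the vertices reachable from \(u\) by \(M\)-alternating paths. A maximum matching has no augmenting path, so \(M\) matches \(T\) bijectively onto \(S\setminus\{u\}\) and \(N(S)=T\), giving \(|N(S)|=|S|-1\).

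Your argument is the classical tight-set induction usually credited to Halmos and Vaughan. It is fully self-contained and uses only counting, but it is not efficient as a procedure, since locating a tight set may require examining exponentially many subsets. The alternating-path proof is algorithmic. When no perfect matching exists, it hands you an explicit deficient set, which is precisely what the paper extracts from Hall's theorem in the proofs of Theorem~\ref{thm:cyclerank-4} and Proposition~\ref{prop:unique-order10-rank5}. It also proves directly the more general statement about matchings saturating \(U\) without assuming \(|U|=|V|\). Your Case~2, by contrast, uses \(|U|=|V|\) to see that \(G_2\) is balanced, though your induction adapts easily to that setting by inducting on \(|U|\).
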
 

\subsection{Structural Results for Parter Graphs}

The following results describe structural properties of Parter graphs
concerning bipartitions and small orders. Note that these original sources used the term ``property (P)''; here, we use the terminology Parter graph for consistency.


\begin{thm}[{\cite[Theorem 2.3]{puja}}] \label{thm:balanced}
If a bipartite graph is a Parter graph, then it is balanced.
\end{thm}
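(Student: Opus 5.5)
The plan is to work directly with a Parter realization and compare two trace identities, one coming from each side of the bipartition. Let \(G\) be a bipartite Parter graph with bipartition \(U\cup V\), where \(|U|=p\) and \(|V|=q\). By definition there is a nonsingular \(A\in S(G)\) with \(\operatorname{diag}(A^{-1})=\mathbf 0\). Order the vertices so that those of \(U\) come first. Since there are no edges inside \(U\) or inside \(V\), the matrix has the block form
\[
A=\begin{pmatrix} D_1 & B\\ B^\top & D_2\end{pmatrix},
\]
where \(D_1\) (of size \(p\times p\)) and \(D_2\) (of size \(q\times q\)) are diagonal, possibly with nonzero diagonal entries, and \(B\) is \(p\times q\). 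Write
\[
C=A^{-1}=\begin{pmatrix} C_{11} & C_{12}\\ C_{21} & C_{22}\end{pmatrix}.
\]
This matrix is symmetric, so \(C_{21}=C_{12}^\top\), and all diagonal entries of \(C_{11}\) and \(C_{22}\) vanish.

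First I use the \((1,1)\) block of \(AC=I_n\), which gives \(D_1C_{11}+BC_{21}=I_p\). Because \(D_1\) is diagonal, the \(i\)-th diagonal entry of \(D_1C_{11}\) is \((D_1)_{ii}(C_{11})_{ii}=0\). Hence every diagonal entry of \(BC_{21}\) equals \(1\), and so \(\operatorname{tr}(BC_{21})=p\).

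Next, the \((2,2)\) block gives \(B^\top C_{12}+D_2C_{22}=I_q\). The same argument yields \(\operatorname{tr}(B^\top C_{12})=q\).

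Finally I compare the two traces. Using \(C_{21}=C_{12}^\top\),
\[
\operatorname{tr}(BC_{21})=\sum_{i,j}B_{ij}(C_{12})_{ij}=\operatorname{tr}(B^\top C_{12}).
\]
Therefore \(p=q\), and \(G\) is balanced.

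I do not expect a serious obstacle. The only point needing care is that the diagonal entries of \(A\) may be nonzero, so one cannot assume the diagonal blocks of \(A\) are zero. The argument handles this because those blocks are diagonal while the corresponding diagonal blocks of \(A^{-1}\) have zero diagonal, so their products contribute nothing to the diagonal.
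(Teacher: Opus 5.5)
Your proof is correct and complete. The paper gives no proof of this statement: it quotes it from \cite{puja} (Theorem~2.3 there) and uses it as a black box, so there is no in-paper argument to compare with, and your proposal is a self-contained proof.

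It is worth seeing what your computation amounts to. Subtract your two block identities before invoking the Parter hypothesis. This gives $\operatorname{tr}(D_1C_{11})-\operatorname{tr}(D_2C_{22})=p-q$. Multiplying by $\det A$ turns it into the identity
\[
\sum_{u\in U}a_{uu}\det A(u)-\sum_{v\in V}a_{vv}\det A(v)=(p-q)\det A .
\]
This is a polynomial identity, so it holds for every symmetric matrix whose off-diagonal support lies in a bipartite graph with parts $U,V$, nonsingular or not. It has two other standard derivations:
\begin{enumerate}
\item differentiate $\det\bigl(\operatorname{diag}(\sqrt{s}\,I_p,s^{-1/2}I_q)\,A\,\operatorname{diag}(\sqrt{s}\,I_p,s^{-1/2}I_q)\bigr)=s^{p-q}\det A$ at $s=1$;
\item in the permutation expansion of $\det A$, note that every nonzero term fixes exactly $p-q$ more vertices of $U$ than of $V$.
\end{enumerate}
The Parter condition kills the left side, and nonsingularity then forces $p=q$.

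Two minor remarks. First, symmetry of $C$ is not needed: the $(2,2)$ block of $CA=I$ gives $\operatorname{tr}(C_{21}B)=q$ directly, and $\operatorname{tr}(BC_{21})=\operatorname{tr}(C_{21}B)$. Second, you never use connectivity, so your argument shows that every bipartition of a bipartite Parter graph is balanced. Consequently the disconnected case needs no appeal to the componentwise lemma \cite[Lemma~2.1]{Howlader}.
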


For a graph \(G\), we write \(\delta(G)\) for its minimum degree.

\begin{lem}[{\cite[Lemma 3.6]{puja}}]\label{lem:isolated_vertex}
If a graph $G$ contains an isolated vertex, then $G$ is not a Parter graph. Consequently, any Parter graph must satisfy $\delta(G) \ge 1$.
\end{lem}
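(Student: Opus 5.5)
The plan is to argue by contradiction, using the determinantal characterization of a Parter realization recalled after the definition of a Parter graph: \(A\in S(G)\) is nonsingular and \(\det A(u)=0\) for every \(u\in V(G)\). I will show that a single isolated vertex already makes these two requirements incompatible.

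Suppose \(G\) has an isolated vertex \(v\) and that \(A\in S(G)\) is a Parter realization of \(G\). Since \(v\) has no neighbours, \(a_{vu}=a_{uv}=0\) for every \(u\neq v\). After a simultaneous permutation of rows and columns that places \(v\) first (a permutation similarity, which preserves both \(\det A\) and the vanishing of \(\det A(u)\) for every \(u\)), \(A\) becomes block diagonal:
\[
A\cong\begin{pmatrix} a_{vv} & 0\\ 0 & A(v)\end{pmatrix}.
\]
Hence
\[
\det A = a_{vv}\,\det A(v).
\]
Because \(v\) is a P-vertex, \(\det A(v)=0\), and therefore \(\det A=0\). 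This contradicts the nonsingularity of \(A\), so no Parter realization exists and \(G\) is not a Parter graph. In the trivial case \(n=1\), \(A(v)\) is empty with determinant \(1\), and \(\det A(v)=0\) is impossible outright, so the conclusion still holds.

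The consequence \(\delta(G)\ge 1\) is just the contrapositive: if \(\delta(G)=0\), then \(G\) has an isolated vertex and so is not a Parter graph. I do not expect any real obstacle. The only point to handle carefully is the block-diagonal factorization of the determinant, which follows directly from the zero row and column at \(v\), for instance by cofactor expansion along row \(v\).
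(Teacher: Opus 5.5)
Your argument is correct: an isolated vertex \(v\) makes \(A\) split as \(a_{vv}\oplus A(v)\), so \(\det A=a_{vv}\det A(v)\), and the P-vertex condition \(\det A(v)=0\) then contradicts nonsingularity. Your handling of the case \(n=1\) via the empty determinant is also fine.

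The paper does not reprove this lemma; it quotes it from \cite{puja}. Your block-diagonal determinant splitting is the standard argument, and it is the same device the paper uses at the bridge in the proof of Proposition~\ref{prop:bridge-property} and at the cut vertex \(u_4\) in the proof of Proposition~\ref{prop:unique-order10-rank5}.
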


\begin{thm}[{\cite[Theorem 2.3]{Howlader}}]\label{lem:pendant_heredity}
Let $G$ be a graph. If $u$ is a pendant vertex of $G$ with the unique neighbor $v$, then $G - \{u,v\}$ is a Parter graph if and only if $G$ is a Parter graph.
\end{thm}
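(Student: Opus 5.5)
The plan is to work directly with determinants, using the characterization that \(G\) is a Parter graph if and only if there is a nonsingular \(A\in S(G)\) with \(\det A(i)=0\) for every vertex \(i\). Order the vertices as \(u,v\), followed by \(W=V(G)\setminus\{u,v\}\), and write a matrix \(A\in S(G)\) in block form
\[
A=\begin{pmatrix} d & b & 0^\top\\ b & e & c^\top\\ 0 & c & B\end{pmatrix},
\]
where \(b\neq 0\) because \(uv\in E(G)\). The row of \(u\) is zero on \(W\) because \(u\) is pendant. The vector \(c\) records the edges from \(v\) into \(W\), and \(B\in S(G-\{u,v\})\).

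The key step is a single identity. Fix a principal submatrix \(M\) of \(A\) that still contains the indices \(u\) and \(v\); in practice \(M=A\) or \(M=A(w)\) for some \(w\in W\). Expand \(\det M\) along the row of \(u\), which has only two nonzero entries, \(d\) and \(b\). The cofactor of \(b\) has a column (the old column of \(u\)) whose only nonzero entry is \(b\), so expanding once more gives
\[
\det M=d\,\det M(u)-b^2\det M(\{u,v\}).
\]
Applied to \(M=A\) and \(M=A(w)\), this yields
\[
\det A=d\det A(u)-b^2\det B,\qquad \det A(w)=d\det A(\{u,w\})-b^2\det B(w).
\]
In addition, \(A(v)\) is the direct sum of the \(1\times1\) block \((d)\) and \(B\), so \(\det A(v)=d\det B\).

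(\(\Rightarrow\)) Suppose \(A\) is a Parter realization of \(G\).
\begin{enumerate}
\item Since \(\det A(u)=0\), the first identity gives \(\det A=-b^2\det B\). As \(\det A\neq0\), it follows that \(\det B\neq 0\).
\item Then \(0=\det A(v)=d\det B\) forces \(d=0\).
\item With \(d=0\), the second identity gives \(0=\det A(w)=-b^2\det B(w)\) for each \(w\in W\).
\end{enumerate}
So \(B\) is nonsingular, has support graph \(G-\{u,v\}\), and satisfies \(\det B(w)=0\) for all \(w\in W\). Hence \(B\) is a Parter realization of \(G-\{u,v\}\).

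(\(\Leftarrow\)) Given a Parter realization \(B\) of \(G-\{u,v\}\), take any \(c\) whose nonzero entries sit exactly at the neighbours of \(v\) in \(W\), choose \(b\neq0\) and \(d=0\), and set \(e=c^\top B^{-1}c\). We check each condition in turn.
\begin{enumerate}
\item \(\det A=-b^2\det B\neq0\).
\item \(\det A(v)=d\det B=0\).
\item \(\det A(w)=-b^2\det B(w)=0\) for each \(w\in W\).
\item By the Schur complement, \(\det A(u)=\det B\,(e-c^\top B^{-1}c)=0\).
\end{enumerate}
The support graph of \(A\) is \(G\), since the diagonal entries \(d\) and \(e\) do not affect it. Hence \(G\) is a Parter graph.

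The degenerate case \(W=\emptyset\), i.e. \(G=K_2\), is handled by the matrix \(\left(\begin{smallmatrix}0&1\\1&0\end{smallmatrix}\right)\), with the convention that the empty graph is Parter. The only step needing care is the double Laplace expansion behind the identity for \(\det M\), and in particular keeping track of signs. The sign is fixed because the two cofactor signs coincide, which produces \(-b^2\) in both expansions.
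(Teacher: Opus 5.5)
Your proof is correct and complete: the double expansion $\det M=d\det M(u)-b^2\det M(\{u,v\})$ checks out, and so does the converse choice $d=0$, $e=c^\top B^{-1}c$.

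The paper gives no proof of this statement; it is quoted from \cite[Theorem~2.3]{Howlader}. The natural comparison is therefore with the paper's proof of \propref{prop:bridge-property}, of which your forward direction is essentially a special case. Treat the pendant edge $uv$ as a bridge with $G_1=\{u\}$ and $A_1=(d)$. Since $A_1(u)$ is the empty matrix with determinant $1$, identity \eqref{detA} becomes exactly your $\det A=d\det A(u)-b^2\det B$. Case~1 of that proof cannot occur, because it would give $\det A(u)=\det A_2\neq0$. Case~2 is precisely your argument ($d=0$, $\det B\neq0$, $\det B(w)=0$).

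Your version gains two things over what the paper's tools provide.
\begin{enumerate}
\item It needs no connectivity hypothesis. The block form with $B$ indexed by all of $W$ handles disconnected $G$ directly, so the componentwise lemma is never invoked.
\item It supplies the converse by an explicit construction. The paper's machinery does not give this directly: \thmref{thm:bridge-preservation} only glues two Parter graphs along a single edge, whereas here $v$ may have several neighbours in $W$.
\end{enumerate}

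One cosmetic point: your ``$(\Rightarrow)$'' proves ``$G$ Parter implies $G-\{u,v\}$ Parter'', which is the reverse of the statement's direction as worded. Both directions are proved, so this is only a labelling issue.
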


\begin{thm}[{\cite[Theorem 4.4]{puja}}]\label{8+2k}
Let $G$ be a bipartite graph of order $8+2k \ (k \ge 0)$ with at least $k$ pendant vertices. Then $G$ is a Parter graph if and only if $G$ has a perfect matching.
\end{thm}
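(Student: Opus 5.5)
The direction ``perfect matching \(\Rightarrow\) Parter'' is immediate. Theorem~\ref{bipartite perfect matching theorem} applies to each component, and by \cite[Lemma~2.1]{Howlader} the Parter property is componentwise. So the plan concerns the converse. I would prove it by induction on \(k\), peeling off pendant edges with Theorem~\ref{lem:pendant_heredity} until only \(8\) vertices remain.

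\emph{Base case \(k=0\).} Here I need that every bipartite Parter graph of order \(8\) has a perfect matching. I would take this from the small-order analysis in \cite{puja}, argued component by component. By Theorem~\ref{thm:balanced} each component of a Parter graph is balanced, and by Lemma~\ref{lem:isolated_vertex} it has minimum degree at least \(1\). The components of order \(\le 4\) are then easily checked by hand. For a connected balanced bipartite graph of order \(6\) or \(8\) with no perfect matching, Theorem~\ref{thm:hall} gives a set \(S\subseteq U\) with \(|N(S)|<|S|\). Such graphs have few edges, hence cycle rank at most \(3\), and Theorem~\ref{thm:cyclerank3} then rules them out. If a direct cycle-rank count fails for a few order-\(8\) configurations, I would treat those by a short determinant computation showing that \(\operatorname{diag}(A^{-1})=\mathbf 0\) is impossible.

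\emph{Inductive step.} Let \(k\ge1\), and let \(G\) be a bipartite Parter graph of order \(8+2k\) with at least \(k\) pendant vertices. Choose a pendant vertex \(u\) with neighbour \(v\). By Theorem~\ref{lem:pendant_heredity}, \(G'=G-\{u,v\}\) is Parter, of order \(8+2(k-1)\). The key bookkeeping is that \(G'\) still has at least \(k-1\) pendant vertices, which I check as follows.
\begin{itemize}
\item If \(v\) had a second pendant neighbour \(u'\), then \(u'\) would be isolated in \(G'\), contradicting Lemma~\ref{lem:isolated_vertex}.
\item Any vertex whose only neighbour is \(v\) becomes isolated in \(G'\), which is likewise impossible.
\item All other pendant vertices of \(G\) keep their unique neighbour, so they remain pendant in \(G'\).
\end{itemize}
Hence at most one pendant vertex is lost, namely \(u\), provided \(v\) itself is not pendant. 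By induction \(G'\) has a perfect matching, and adding the edge \(uv\) gives one for \(G\).

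\emph{Main obstacle: the case \(v\) pendant.} Then \(uv\) is a \(K_2\)-component, and deleting it destroys two pendant vertices, so the pendant count drops by \(2\) and the induction hypothesis no longer applies. This case cannot be ignored as stated. In a disconnected graph with pendant vertices only inside \(K_2\) components it is a genuine gap, since the statement as read literally appears to fail. For example, \(G_{3,2}\cup 2K_2\) (order \(14\), \(k=3\), with \(4\ge3\) pendant vertices) is Parter by \cite[Lemma~2.1]{Howlader} and has no perfect matching. So I would read the theorem as intended for connected \(G\), or for pendant vertices lying outside \(K_2\) components.

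Under that reading I would first choose \(u\) with \(v\) non-pendant, which is always possible when \(G\) is connected of order \(\ge 4\). If connectivity is lost after deletion, I would pass to components and apply the argument separately, after checking that each component still meets the order/pendant bound. That componentwise bookkeeping is the step I expect to need the most care.
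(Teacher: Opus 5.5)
This statement is only quoted from \cite{puja}. The paper gives no proof of it and uses it once, in Corollary~\ref{cor:min-order-counterexample}, for \emph{connected} graphs with $k\in\{0,1\}$. So your proposal has to be judged on its own, not against a proof in the paper.

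Your objection to the literal wording is correct. $G_{3,2}$ is Parter by Theorem~\ref{thm:general-family}, $K_2$ is Parter, and the property is componentwise. So already $G_{3,2}\cup K_2$ (order $12$, $k=2$, two pendant vertices) is a Parter graph with no perfect matching. A hypothesis such as ``connected'' or ``no $K_2$ component'' is needed, and the paper's own use of the result is unaffected.

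Your base case is sound, and the determinant fallback is unnecessary. Let $U_1$ be a Hall violator with $|U_1|=r$ in a connected order-$8$ graph with $\delta\ge1$. Then $2\le r\le 3$, so $|E|\le r(r-1)+4(4-r)=10$ and $m\le 3$. The same count handles orders $2,4,6$. Hence Theorem~\ref{thm:cyclerank3}, applied componentwise, gives the base case for every order $\le 8$.

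\textbf{The gap.} The real problem is the step you postponed. Under either corrected reading, $G'=G-\{u,v\}$ need not satisfy the induction hypothesis. It can be disconnected, and it can acquire $K_2$ components. For example, if $v$ has a neighbour $w$ of degree $2$ whose other neighbour $z$ is pendant, then $\{w,z\}$ becomes a $K_2$ component of $G'$. ``Check that each component still meets the order/pendant bound'' is therefore not automatic. Write $p(H)$ for the number of pendant vertices of $H$. The bound $|V(H)|\le 8+2p(H)$ is not inherited by components in general, and $K_2$ components carrying surplus pendants are exactly how your own counterexample breaks it.

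\textbf{The fix.} Apply your first bullet globally:
\begin{itemize}
\item In a Parter graph, distinct pendant vertices outside $K_2$ components have distinct, non-pendant neighbours.
\item Hence $|V(H)|-2p(H)\ge 0$ for every Parter component $H\not\cong K_2$, and this excess is additive over components.
\item Each $K_2$ component of $G'$ contains exactly one pendant vertex of $G$.
\end{itemize}
With these three facts the componentwise bookkeeping closes.

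It is simpler still to skip the induction. Let $u_1,\dots,u_p$ be all pendant vertices of $G$ and $v_1,\dots,v_p$ their neighbours; these neighbours are distinct and non-pendant by the same argument. Delete the pairs $\{u_i,v_i\}$ one at a time using \thmref{lem:pendant_heredity}; each remaining $u_j$ stays pendant with neighbour $v_j$. You end with a Parter graph of order $8+2k-2p\le 8$. The base case gives it a perfect matching, and adding the edges $u_iv_i$ completes the proof.
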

The preceding results yield the following lower bound on the order of a
counterexample.
\begin{cor}[Minimum order of a counterexample]
\label{cor:min-order-counterexample}
If a connected bipartite Parter graph does not have a perfect matching, then its order is at least \(10\). Moreover, if \(G\) is of order \(10\), then \(\delta(G)\geq 2\).
\end{cor}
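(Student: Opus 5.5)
The plan is to combine the balancedness of bipartite Parter graphs (Theorem~\ref{thm:balanced}) with the order-based equivalence of Theorem~\ref{8+2k}. The key point is that Theorem~\ref{8+2k} does not require connectedness, so a small graph can be padded to order \(8\) by adding disjoint copies of \(K_2\).

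Let \(G\) be a connected bipartite Parter graph with no perfect matching, and let \(n=|V(G)|\).

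\textbf{Order at least \(10\).} By Theorem~\ref{thm:balanced}, \(G\) is balanced, so \(n=2j\) is even.

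If \(n=8\), apply Theorem~\ref{8+2k} with \(k=0\). The pendant-vertex hypothesis is vacuous, so \(G\) has a perfect matching, a contradiction.

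If \(n=2j<8\), let \(G'=G\cup (4-j)K_2\) be the disjoint union of \(G\) with \(4-j\) copies of \(K_2\). Then \(G'\) is bipartite of order \(8\). By \cite[Lemma~2.1]{Howlader}, the Parter property is componentwise, and \(K_2\) is a Parter graph: \(\begin{pmatrix}0&1\\1&0\end{pmatrix}\) is nonsingular with zero diagonal inverse, or alternatively Theorem~\ref{bipartite perfect matching theorem} applies. Hence \(G'\) is a Parter graph. Theorem~\ref{8+2k} with \(k=0\) then gives a perfect matching of \(G'\). A perfect matching of a disjoint union restricts to a perfect matching on each component, so \(G\) has a perfect matching, a contradiction.

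Therefore \(n\ge 10\).

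\textbf{Minimum degree when \(n=10\).} Since \(G\) is a Parter graph, \(\delta(G)\ge1\) by Lemma~\ref{lem:isolated_vertex}; this also follows directly from connectedness with \(n\ge 2\). Suppose \(G\) had a pendant vertex. Then \(G\) is bipartite of order \(8+2\cdot1\) with at least \(k=1\) pendant vertex, so Theorem~\ref{8+2k} gives a perfect matching, a contradiction. Hence \(\delta(G)\ge2\).

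The only mildly delicate step is the reduction for orders below \(8\). It relies on the fact that Theorem~\ref{8+2k} is stated for arbitrary, not necessarily connected, bipartite graphs, together with the componentwise behaviour of both the Parter property and perfect matchings.
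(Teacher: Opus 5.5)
Your proof is correct and follows the same route as the paper: Theorem~\ref{thm:balanced} forces even order, Theorem~\ref{8+2k} with $k=0$ excludes orders up to $8$, and Theorem~\ref{8+2k} with $k=1$ excludes pendant vertices at order $10$. Your padding by disjoint copies of $K_2$ for orders below $8$ is more careful than the paper, which applies Theorem~\ref{8+2k} (stated only for order exactly $8+2k$) to all orders $\le 8$ without comment.
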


\begin{proof}
By \thmref{thm:balanced}, any bipartite Parter graph is balanced, and thus has even order. If \(|V(G)| \le 8\), applying Theorem~\ref{8+2k} with \(k=0\) implies that \(G\) must have a perfect matching, which contradicts our hypothesis. Therefore, \(|V(G)| \ge 10\).

Moreover, if \(|V(G)| = 10\), Theorem~\ref{8+2k} with \(k=1\) shows that \(G\) cannot have a pendant vertex. Consequently, \(\delta(G) \ge 2\).
\end{proof}

Let \(G \cdot H\) denote the graph obtained from two disjoint graphs
\(G\) and \(H\) by joining a vertex of \(G\) to a vertex of \(H\) by a
single edge.

\begin{thm}[{\cite[Theorem~5.3]{sharma}}]\label{thm:bridge-preservation}
Let \(G\) and \(H\) be two connected Parter graphs. Then \(G\cdot H\) is a Parter graph.
\end{thm}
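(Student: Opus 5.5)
The plan is to build a Parter realization of \(G\cdot H\) directly as a block matrix from Parter realizations of \(G\) and \(H\), and to control its determinant and the diagonal of its inverse with Schur complements. The key fact I expect to do all the work is that a Parter realization has zero diagonal in its inverse.

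\emph{Setup.} Let \(A\in S(G)\) and \(B\in S(H)\) be Parter realizations. So \(A\) and \(B\) are nonsingular, and \(\operatorname{diag}(A^{-1})=\mathbf 0\), \(\operatorname{diag}(B^{-1})=\mathbf 0\). Suppose the connecting edge joins \(u\in V(G)\) to \(w\in V(H)\). Fix any real \(t\neq 0\) and set
\[
M=\begin{pmatrix} A & t\,e_u e_w^\top\\ t\,e_w e_u^\top & B\end{pmatrix}.
\]
The only off-diagonal nonzeros of \(M\) outside the two diagonal blocks are at positions \((u,w)\) and \((w,u)\). Hence \(M\) is symmetric and \(G(M)=G\cdot H\).

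\emph{Nonsingularity.} Since \(A\) is invertible, the Schur complement formula gives
\[
\det M=\det A\cdot\det\bigl(B-t^2 e_w e_u^\top A^{-1}e_u e_w^\top\bigr)=\det A\cdot\det\bigl(B-t^2 (A^{-1})_{uu}\,e_w e_w^\top\bigr).
\]
Because \(u\) is a P-vertex of \(A\), we have \((A^{-1})_{uu}=\det A(u)/\det A=0\). So the perturbation term vanishes, and \(\det M=\det A\det B\neq 0\).

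\emph{Diagonal of the inverse.} By the block inverse formula, the lower-right block of \(M^{-1}\) is the inverse of the Schur complement of \(A\) in \(M\). We showed above that this Schur complement equals \(B\), so that block is \(B^{-1}\). Symmetrically, using invertibility of \(B\) and \((B^{-1})_{ww}=0\), the Schur complement of \(B\) in \(M\) is \(A-t^2(B^{-1})_{ww}e_ue_u^\top=A\). Hence the upper-left block of \(M^{-1}\) is \(A^{-1}\). Consequently \(\operatorname{diag}(M^{-1})\) is the concatenation of \(\operatorname{diag}(A^{-1})\) and \(\operatorname{diag}(B^{-1})\), which is \(\mathbf 0\). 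Every vertex of \(G\cdot H\) is therefore a P-vertex of \(M\), so \(M\) is a Parter realization.

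\emph{Expected obstacle.} The only point needing care is recognizing that the rank-one correction in each Schur complement is weighted by a diagonal entry of the other block's inverse, and that this entry is zero precisely because of the Parter property. Once that is observed, the argument needs no choice of \(t\) beyond \(t\neq0\). Connectedness of \(G\) and \(H\) plays no essential role beyond ensuring that \(G\cdot H\) is connected.
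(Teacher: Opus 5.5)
Your proof is correct. The paper does not prove this statement itself; it quotes it from Sharma and Panda. The closest argument in the paper is the proof of \propref{prop:bridge-property}, which uses essentially the same mechanism: the same block matrix with a single bridge entry, and the Laplace-expansion identities \eqref{detA} and \eqref{detA(x)A(y)}.

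Read backwards (Case~1 of that proof), with $\det A(u)=\det B(w)=0$, those identities give the result directly:
\begin{itemize}
\item $\det M=\det A\det B-t^2\det A(u)\det B(w)=\det A\det B\neq0$;
\item $\det M(u)=\det A(u)\det B=0$;
\item $\det M(z)=\det A(z)\det B-t^2\det A(z,u)\det B(w)=0$ for $z\in V(G)\setminus\{u\}$, and symmetrically on the $H$ side.
\end{itemize}

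Your Schur-complement version is the same computation, expressed through the inverse entries $(A^{-1})_{uu}=\det A(u)/\det A$ and $(B^{-1})_{ww}=\det B(w)/\det B$ instead of cofactors. It does yield slightly more structure. For every $t\neq0$, the diagonal blocks of $M^{-1}$ are exactly $A^{-1}$ and $B^{-1}$. It also makes transparent that connectedness of $G$ and $H$ is not needed.
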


\section{Bipartite Parter Graphs of Cycle Rank 4}

In this section, we prove the main theorem for bipartite graphs of cycle rank $4$, using \propref{prop:bridge-property} as the key structural result.

\begin{proof}[Proof of \propref{prop:bridge-property}]
Since $G$ is a Parter graph, there exists a nonsingular matrix $A\in S(G)$ such that
$$
    \det A(z)=0 \ \text{for every }z\in V(G).
$$
Ordering the vertices according to $V(G_1)$ and $V(G_2)$, we may write
$$
    A=
    \begin{pmatrix}
    A_1 & a e_xe_y^{\top}\\
    a e_ye_x^{\top} & A_2
    \end{pmatrix},
$$
where $A_i\in S(G_i)$, $a\neq 0$, and $e_x \in \mathbb{R}^{\vert{}V(G_1)\vert{}}$ and $e_y \in \mathbb{R}^{\vert{}V(G_2)\vert{}}$ are the standard basis vectors corresponding to the vertices $x$ and $y$, respectively. Because the only nonzero entries in the off-diagonal blocks are those corresponding to the bridge \(xy\), a direct Laplace expansion of \(\det A\) along the column corresponding to the vertex \(y\) yields
\begin{equation}\label{detA}
    \det A = \det(A_1)\det(A_2) - a^2\det(A_1(x))\det(A_2(y)).
\end{equation}
Furthermore,
\begin{equation}\label{detA(x)A(y)}
\det A(x)=\det(A_1(x))\det(A_2)
\quad\text{and}\quad
\det A(y)=\det(A_1)\det(A_2(y)).
\end{equation}
Since \(G\) is a Parter graph,
\[
\det A(x)=\det A(y)=0.
\]
Since \(\det A\neq0\), \(\det(A_1)\) and \(\det(A_2)\) must either both be zero or both be nonzero. For suppose exactly one is zero, say \(\det(A_1)=0\) and \(\det(A_2)\neq0\). Then \(\det A(x)=0\) implies \(\det(A_1(x))=0\), and hence
\[
\det A
=\det(A_1)\det(A_2)-a^2\det(A_1(x))\det(A_2(y))
=0,
\]
a contradiction. The case \(\det(A_1)\neq0\) and \(\det(A_2)=0\) follows similarly.

Therefore, exactly one of the following two cases occurs.

\medskip
\noindent
\noindent\textbf{Case 1.}
Suppose that \(\det(A_1)\neq0\) and \(\det(A_2)\neq0\).

It follows from \eqref{detA(x)A(y)} that
$$
    \det(A_1(x))=\det(A_2(y))=0.
$$
For $z\in V(G_1)\setminus\{x\}$, we have
$$
    0=\det A(z) =\det(A_1(z))\det(A_2) - a^2\det(A_1(z,x))\det(A_2(y)).
$$
Since $\det(A_2)\neq 0$ and $\det(A_2(y))=0$, it follows that
$$
    \det(A_1(z))=0.
$$
Thus, $A_1\in S(G_1)$ is a Parter realization of $G_1$. Similarly, $A_2\in S(G_2)$ is a Parter realization of $G_2$.

\medskip
\noindent
\textbf{Case 2.}
Suppose that \(\det(A_1)=\det(A_2)=0\).

Since $\det A\neq 0$, it follows from \eqref{detA} that
$$
    \det(A_1(x))\neq 0 \qquad\text{and}\qquad \det(A_2(y))\neq 0.
$$
For \(z\in V(G_1)\setminus\{x\}\),
\[
\det A(z)
=-a^2\det(A_1(z,x))\det(A_2(y))=0,
\]
Hence $$\det(A_1(z,x))=0.$$
Therefore, \(A_1(x)\in S(G_1-x)\) is a Parter realization of $G_1-x$. An analogous argument shows that \(G_2-y\) is also a Parter graph.
\end{proof}

We now prove the main result of this section.

\begin{proof}[Proof of Theorem~\ref{thm:cyclerank-4}]
The sufficiency follows from \thmref{bipartite perfect matching theorem}.
Conversely, suppose that $G$ is a Parter graph. We prove that $G$ has a perfect matching by induction on $|V(G)|$.

By \lemref{lem:isolated_vertex}, the graph $G$ has no isolated vertices. Hence,
$\delta(G)\ge 1.$
The result is immediate when $|V(G)|=2$.

Suppose that $G$ has a pendant vertex $u$, and let $v$ be its unique neighbor. The graph $G-\{u,v\}$ is also a Parter graph by \thmref{lem:pendant_heredity}. Moreover, deleting vertices cannot increase the cycle rank, so 
$$m(G-\{u,v\})\le m(G) = 4.$$
By Theorem~\ref{thm:cyclerank3}, if \(m(G-\{u,v\})\le 3\), or by the induction hypothesis if \(m(G-\{u,v\})=4\), the graph \(G-\{u,v\}\) has a perfect matching \(M'\). Consequently,
$$
    M'\cup\{uv\}
$$
is a perfect matching of $G$.

We may therefore assume that $\delta(G)\ge 2.$
Let \(G_1,G_2,\ldots,G_t\) be the connected components of \(G\).
By \cite[Lemma~2.1]{Howlader}, every connected component \(G_i\) is a Parter graph. Consequently, Theorem~\ref{thm:balanced} implies that each \(G_i\) is balanced.
Suppose, for a contradiction, that some component $H$ of $G$ has no perfect matching. Since the cycle rank is nonnegative and additive over connected components,
$$
    m(H)\le m(G) = 4.
$$
Let $(U,V)$ be the bipartition of $H$, where
$
    |U|=|V|=n.
$
By Hall's theorem, there exists $U_1\subseteq U$ such that
$
    |N(U_1)|<|U_1|,
$
where \(N(U_1)\) denotes the set of all neighbors of the vertices in \(U_1\).
Let
\[
V_1=N(U_1), \qquad
U_2=U\setminus U_1, \qquad
V_2=V\setminus V_1,
\]
and write \(|U_1|=r\), \(|V_1|=k\), so that
\(
d:=r-k\ge1.
\)

Since $\delta(G) \ge 2$ and $H$ is a component of $G$, we have $\delta(H) \ge 2$. 
Let \(e(X,Y)\) denote the number of edges with one endpoint in
\(X\) and the other in \(Y\). Since \(N(U_1)=V_1\), every vertex of \(U_1\) has
at least two neighbors in \(V_1\), and hence
\(
e(U_1,V_1)\ge2r.
\)
Moreover, there are no edges between \(U_1\) and \(V_2\). Consequently, every
vertex of \(V_2\) has all its neighbors in \(U_2\). As \(\delta(H)\ge2\), it
follows that
\(
e(U_2,V_2)\ge2(n-k).
\)
Finally, as \(H\) is connected, there exists an edge joining
\(U_1\cup V_1\) and \(U_2\cup V_2\). Since \(e(U_1,V_2)=0\), it follows that
\(
e(U_2,V_1)\ge1.
\)

Summing over these disjoint edge sets, we obtain
$$
\begin{aligned}
    |E(H)| &\ge e(U_1, V_1) + e(U_2, V_2) + e(U_2, V_1) \\
           &\ge 2r + 2(n-k) + 1 \\
           &= 2n + 2(r-k) + 1 \\
           &= 2n + 2d + 1.
\end{aligned}
$$
Since \(H\) is connected and \(m(H)\le4\), Theorem~\ref{thm:CR} gives
$$
    |E(H)|=|V(H)|-1+m(H) \le 2n+3.
$$
Combining these gives $$2n+3 \ge 2n+2d+1,$$ and hence $d \le 1$. Since \(d\ge1\), we conclude that \(d=1\).
This forces \(m(H)=4\). Hence equality holds throughout the preceding chain of inequalities. In particular,
\[
e(U_2,V_1)=1.
\]
Let \(xy\) be the unique edge between \(U_2\) and \(V_1\), where
\(x\in U_2\) and \(y\in V_1\). Since there are no edges between \(U_1\) and \(V_2\), the edge \(xy\) is a bridge of \(H\). Let \(H_1\) and \(H_2\) be the subgraphs of \(H\) induced by
\(U_1\cup V_1\) and \(U_2\cup V_2\), respectively. Then \(H_1\) and \(H_2\)
are the two components of \(H-xy\).
Since \(d=1\), we have
\[
|U_1|=|V_1|+1
\quad\text{and}\quad
|V_2|=|U_2|+1.
\]
Thus, neither \(H_1\) nor \(H_2\) is balanced.

Since \(xy\) is a bridge of the Parter graph \(H\), \propref{prop:bridge-property} implies that either both \(H_1\) and \(H_2\) are Parter graphs, or both \(H_1-y\) and \(H_2-x\) are Parter graphs.
The former is impossible, since \(H_1\) and \(H_2\) are unbalanced. The latter is also impossible, as deleting \(y\) from \(H_1\) and \(x\) from \(H_2\) increases the difference between the partite sets to \(2\), so both \(H_1-y\) and \(H_2-x\) are unbalanced. In either case, this contradicts Theorem~\ref{thm:balanced}, which requires all bipartite Parter graphs to be balanced.
Therefore, every connected component of \(G\) has a perfect matching. Consequently, \(G\) has a perfect matching.
\end{proof}

\section{Sharpness of the Cycle Rank Bound}\label{sec:sharpness}

We now show that the cycle rank bound in Theorem~\ref{thm:cyclerank-4} is sharp. To this end, we construct a family of connected balanced bipartite Parter graphs that lack a perfect matching. This construction also provides counterexamples to the converse of
Theorem~\ref{bipartite perfect matching theorem} with arbitrarily large cycle rank.

\subsection{A General Family of Counterexamples}
For integers \(p>q\ge2\), define the bipartite graph \(G_{p,q}\) as follows. Let
$
U=U_1\cup U_2$ and $V=V_1\cup V_2,
$
where
$$
|U_1|=|V_2|=p\ \text{and} \ |U_2|=|V_1|=q.
$$
Let
$
U_2=\{x_1,\ldots,x_q\}$ and 
$V_1=\{v_1,\ldots,v_q\}.$
The edge set of $G_{p,q}$ is given by
$$E(G_{p,q}) = \{uv : u \in U_1, v \in V_1\} \cup \{xw : x \in U_2, w \in V_2\} \cup \{x_i v_i\}_{i=1}^q.$$
This construction is illustrated in Figure~\ref{fig:general-family}.
\begin{figure}[ht]
\centering
\begin{tikzpicture}[
    vertex/.style={circle, fill=black, inner sep=2pt}
]

\node[vertex] (u1) at (-1.2,6) {};
\node[vertex] (u2) at (0,6) {};
\node[vertex] (u3) at (1.2,6) {};
\node at (2.1,6) {$\cdots$};
\node[vertex] (u4) at (3.0,6) {};
\node[vertex] (u5) at (4.2,6) {};
\node[vertex] (up) at (5.4,6) {};

\node at (-1.6,6) {$u_1$};
\node at (5.8,6) {$u_p$};

\draw[gray!40] (2.1,6) ellipse (4.2 and 0.65);

\node at (7,6) {$U_1$};

\node[vertex] (v1) at (0,4) {};
\node[vertex] (v2) at (1.2,4) {};
\node at (2.1,4) {$\cdots$};
\node[vertex] (v3) at (3,4) {};
\node[vertex] (vq) at (4.2,4) {};

\node at (-0.4,4) {$v_1$};
\node at (4.6,4) {$v_q$};

\draw[gray!40] (2.1,4) ellipse (3 and 0.55);

\node at (5.5,4) {$V_1$};

\foreach \u in {u1,u2,u3,u4,u5,up}
{
    \foreach \v in {v1,v2,v3,vq}
    {
        \draw (\u) -- (\v);
    }
}

\draw[
    decorate,
    decoration={brace,amplitude=6pt,mirror}
]
(-2.7,5.8) -- (-2.7,4.5)
node[midway,left=8pt] {$K_{p,q}$};

\node[vertex] (x1) at (0,2.0) {};
\node[vertex] (x2) at (1.2,2.0) {};
\node at (2.1,2.0) {$\cdots$};
\node[vertex] (x3) at (3.0,2.0) {};
\node[vertex] (xq) at (4.2,2.0) {};

\node at (-0.4,2.0) {$x_1$};
\node at (4.6,2.0) {$x_q$};

\draw[gray!40] (2.1,2.0) ellipse (3 and 0.55);

\node at (5.5,2.0) {$U_2$};

\draw (v1) -- (x1);
\draw (v2) -- (x2);
\draw (v3) -- (x3);
\draw (vq) -- (xq);


\node[vertex] (w1) at (-1.2,0) {};
\node[vertex] (w2) at (0,0) {};
\node[vertex] (w3) at (1.2,0) {};
\node at (2.1,0) {$\cdots$};
\node[vertex] (w4) at (3.0,0) {};
\node[vertex] (w5) at (4.2,0) {};
\node[vertex] (wp) at (5.4,0) {};

\node at (-1.6,0) {$w_1$};
\node at (5.8,0) {$w_p$};

\draw[gray!40] (2.1,0) ellipse (4.2 and 0.65);

\node at (7,0) {$V_2$};

\foreach \u in {x1,x2,x3,xq}
{
    \foreach \w in {w1,w2,w3,w4,w5,wp}
    {
        \draw (\u) -- (\w);
    }
}

\draw[
    decorate,
    decoration={brace,amplitude=6pt,mirror}
]
(-2.7,1.8) -- (-2.7,0.5)
node[midway,left=8pt] {$K_{p,q}$};

\end{tikzpicture}

\caption{The bipartite graph $G_{p,q}$.}
\label{fig:general-family}
\end{figure}

Thus, with respect to the ordered bipartitions
\((U_1,U_2)\) and \((V_1,V_2)\), the biadjacency matrix of
\(G_{p,q}\) is
$$
\begin{pmatrix}
J_{p,q}& \mathbf0\\
I_q&J_{q,p}
\end{pmatrix},
$$
where \(J_{a,b}\) denotes the \(a\times b\) all-ones matrix, \(\mathbf0\) denotes the \(p\times p\) zero matrix.
\begin{thm}\label{thm:general-family}
Let \(p>q\ge2\). Then \(G_{p,q}\) is a connected balanced bipartite Parter graph without a perfect matching, and has cycle rank
$$
m(G_{p,q})=(q-1)(2p-1).
$$
In particular, for every odd integer \(r\ge5\), there exists a connected
balanced bipartite Parter graph \(G\) with \(m(G)=r\) that does not have a perfect matching.
\end{thm}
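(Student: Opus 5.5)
The elementary claims can be checked directly from the definition of \(G_{p,q}\); the real work is an explicit Parter realization. Connectivity holds because \(U_1\cup V_1\) and \(U_2\cup V_2\) each induce a connected \(K_{p,q}\), and the matching edges \(x_iv_i\) join these two pieces. Both colour classes have \(p+q\) vertices, so \(G_{p,q}\) is balanced. There is no perfect matching because \(N(U_1)=V_1\) and \(|V_1|=q<p=|U_1|\), so Hall's condition (Theorem~\ref{thm:hall}) fails. For the cycle rank, \(|E|=2pq+q\) and \(|V|=2(p+q)\), so Theorem~\ref{thm:CR} gives \(m=2pq+q-2p-2q+1=(q-1)(2p-1)\). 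Taking \(q=2\) and \(p\ge3\) then yields \(m=2p-1\), which runs over every odd \(r\ge5\).

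The purely bipartite ansatz \(A=\begin{pmatrix}0&B\\B^\top&0\end{pmatrix}\) cannot work. Its inverse automatically has zero diagonal, but \(B\) would have to be nonsingular, and with this support that forces a perfect matching. So some diagonal entries must be nonzero. Order the vertices as \(U_1,V_1,U_2,V_2\) and look for
\[
A=\begin{pmatrix}
\alpha I_p & J_{p,q} & 0 & 0\\
J_{q,p} & E_1 & tI_q & 0\\
0 & tI_q & E_2 & J_{q,p}\\
0 & 0 & J_{p,q} & \beta I_p
\end{pmatrix},
\qquad E_i=\gamma_i I_q+\delta_i J_q,
\]
with \(\alpha\beta t\neq0\). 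The off-diagonal entries of \(E_i\) lie inside \(V_1\) (respectively \(U_2\)), which is an independent set. So I will need \(\delta_i=0\), or else I will allow general diagonal matrices \(E_i\). The graph forces \(E_i\) to be diagonal, say \(E_1=\gamma_1I_q\), \(E_2=\gamma_2I_q\), or more generally \(\mathrm{diag}\) vectors.

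Next, eliminate \(U_1\) and \(V_2\) by Schur complements. Since the \(U_1\) block is \(\alpha I_p\), eliminating it replaces the \(V_1\) block by \(E_1-\tfrac{p}{\alpha}J_q\); similarly, eliminating \(V_2\) replaces the \(U_2\) block by \(E_2-\tfrac{p}{\beta}J_q\). This leaves the \(2q\times2q\) matrix
\[
S=\begin{pmatrix} E_1-\tfrac p\alpha J_q & tI_q\\ tI_q & E_2-\tfrac p\beta J_q\end{pmatrix}.
\]
The block formula for the inverse then says \(\operatorname{diag}(A^{-1})=0\) holds exactly when two sets of conditions hold:
\begin{enumerate}
\item \(S\) is invertible and \(S^{-1}\) has zero diagonal, which handles the vertices of \(V_1\cup U_2\);
\item each \(U_1\)-diagonal entry of \(A^{-1}\), which equals \(\tfrac1\alpha+\tfrac1{\alpha^2}\mathbf 1^\top X_{11}\mathbf 1\), vanishes, i.e.\ \(\mathbf 1^\top X_{11}\mathbf 1=-\alpha\); and symmetrically \(\mathbf 1^\top X_{22}\mathbf 1=-\beta\), where \(X_{jj}\) are the diagonal blocks of \(S^{-1}\).
\end{enumerate}

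The main obstacle is choosing the parameters so that all of these conditions hold at once and \(S\) stays nonsingular. My first attempt will keep everything invariant under permuting the indices \(1,\dots,q\) simultaneously in \(V_1\) and \(U_2\). Then \(S\) lies in the algebra generated by \(I\) and \(J\) in each block, and it splits into the \(\mathbf 1\)-direction and its orthogonal complement. On the complement \(S\) acts as \(\begin{pmatrix}\gamma_1&t\\t&\gamma_2\end{pmatrix}\); on \(\mathbf 1\) it acts as \(\begin{pmatrix}\gamma_1-pq/\alpha&t\\t&\gamma_2-pq/\beta\end{pmatrix}\). The diagonal of \(S^{-1}\) and the quantities \(\mathbf 1^\top X_{jj}\mathbf 1\) then become explicit rational functions of \(\alpha,\beta,\gamma_1,\gamma_2,t\). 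I expect the symmetric choice \(\beta=\alpha\), \(\gamma_2=\gamma_1\) to reduce everything to two equations in a few unknowns. Concretely, I will try \(\gamma_1=\gamma_2=0\) first, so that the complement part of \(S\) is \(\begin{pmatrix}0&t\\t&0\end{pmatrix}\) with zero-diagonal inverse, and then solve for \(\alpha\), \(t\) so that the \(\mathbf 1\)-part also contributes zero diagonal and \(\mathbf 1^\top X_{11}\mathbf 1=-\alpha\). Here \(p\neq q\) should be exactly what makes this system solvable with nonzero parameters. If \(\gamma_i=0\) makes it inconsistent, I will reintroduce \(\gamma_1=-\gamma_2\) as an extra degree of freedom.
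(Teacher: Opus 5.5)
Your checks of connectivity, balance, the Hall violation, the cycle rank and the $q=2$ specialization agree with the paper. Your reduction (eliminate $U_1$ and $V_2$, then split $S$ along $\mathbf 1$ and $\mathbf 1^\perp$) is also correct. \textbf{The gap is that you never exhibit parameters, and both concrete choices you commit to are impossible.}

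Put $P_1=\frac1q J_q$ and $P_0=I_q-P_1$. Then $S^{-1}=K^{-1}\otimes P_0+L^{-1}\otimes P_1$, where $K=\begin{pmatrix}\gamma_1&t\\ t&\gamma_2\end{pmatrix}$ and $L$ is your $\mathbf 1$-block. The zero-diagonal condition on $V_1$ reads $(q-1)(K^{-1})_{11}+(L^{-1})_{11}=0$, and the $U_1$ condition reads $q(L^{-1})_{11}=-\alpha$.
\begin{itemize}
\item For $\gamma_1=\gamma_2=0$, $K^{-1}$ has zero diagonal. The first condition then forces $(L^{-1})_{11}=0$, while the second forces $(L^{-1})_{11}=-\alpha/q\neq0$. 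Asking the $\mathbf 1$-part both to contribute zero diagonal and to give $\mathbf 1^\top X_{11}\mathbf 1=-\alpha$ is self-contradictory.
\item More generally, combining the two conditions gives $\gamma_2=\alpha m$ and $\gamma_1=\beta m$, with $m=\det K/(q(q-1))$. The remaining equation then forces $\alpha\beta m=\lambda:=\frac{pq(p-1)}{2p-q}$ and $t^2=m\bigl(\lambda-q(q-1)\bigr)=m\,\frac{q(p-q)(p-q+1)}{2p-q}$.
\item Since $t\neq0$ and $p>q$, this gives $m>0$ and hence $\gamma_1\gamma_2=\alpha\beta m^2=\lambda m>0$. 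So your fallback $\gamma_1=-\gamma_2$ fails too, as does any choice with a vanishing $\gamma_i$.
\end{itemize}

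\textbf{The missing idea is to put nonzero diagonal entries of a common sign on all four classes.} The simplest solution of the system above is $m=1$, $\alpha=\beta=\gamma_1=\gamma_2=\sqrt\lambda$, $t=c$ with $c^2=\lambda-q(q-1)$. This is exactly the paper's matrix $A=\begin{pmatrix}\sqrt\lambda I&B\\ B^\top&\sqrt\lambda I\end{pmatrix}$ with $B=\begin{pmatrix}J_{p,q}&\mathbf 0\\ cI_q&J_{q,p}\end{pmatrix}$. In your framework $S$ is then invertible, since $\det K=q(q-1)m\neq0$ and $\alpha\beta\det L=q(pq-\lambda)\neq0$.

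With that input, your elimination-plus-splitting argument becomes a valid alternative verification. It even yields the whole family $\alpha\beta=\lambda/m$, $m>0$. The paper instead takes the Schur complement of the $\sqrt\lambda I$ blocks and checks, via an explicit inverse, that $(\lambda I-BB^\top)^{-1}$ and $(\lambda I-B^\top B)^{-1}$ have zero diagonal. As written, however, your proposal does not establish a Parter realization of $G_{p,q}$, which is the only nontrivial part of the theorem.
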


\begin{proof}
By construction, \(G_{p,q}\) is connected and balanced. Moreover,
\(
N(U_1)=V_1.
\)
Since \(p>q\),
\(
|N(U_1)|<|U_1|.
\)
Thus, by Hall's theorem, \(G_{p,q}\) has no perfect matching.
The graph has
$
|V(G_{p,q})|=2(p+q)
$
vertices and
$
|E(G_{p,q})|=pq+q+pq=2pq+q
$
edges. Therefore,
$$
\begin{aligned}
m(G_{p,q})
&=|E(G_{p,q})|-|V(G_{p,q})|+1\\
&=2pq+q-2p-2q+1\\
&=(q-1)(2p-1).
\end{aligned}
$$
It remains to show that \(G_{p,q}\) is a Parter graph.
Set
$$
\lambda=\frac{pq(p-1)}{2p-q}
\quad
\text{and}
\quad
c^2=\frac{q(p-q)(p-q+1)}{2p-q}.
$$
Since $p>q\geq 2$, we have $\lambda>0$ and $c^2>0$. 

Choose $c>0$ satisfying the above equation.
Consider the matrix
$$
B=
\begin{pmatrix}
J_{p,q} & \mathbf0\\
cI_q & J_{q,p}
\end{pmatrix},
$$
and define
$$
A=
\begin{pmatrix}
\sqrt{\lambda}\,I_{p+q} & B\\
B^\top & \sqrt{\lambda}\,I_{p+q}
\end{pmatrix}.
$$
Clearly, $ A\in S(G_{p,q}).$
Set
$
R=\lambda I_{p+q}-BB^\top.
$

Since
$$
BB^\top=
\begin{pmatrix}
qJ_p & cJ_{p,q}\\
cJ_{q,p} & c^2I_q+pJ_q
\end{pmatrix},
$$
we obtain
$$
R=
\begin{pmatrix}
\lambda I_p-qJ_p & -cJ_{p,q}\\
-cJ_{q,p} & (\lambda-c^2)I_q-pJ_q
\end{pmatrix}.
$$
By the definitions of $\lambda$ and $c$,
$$
\lambda-c^2=q(q-1).
$$

Set
$$
\Delta=
\frac{pq^2(p-q+1)}{2p-q}.
$$
Using
$$
J_p^2=pJ_p,\qquad
J_q^2=qJ_q,\qquad
J_{p,q}J_{q,p}=qJ_p,\qquad
J_{q,p}J_{p,q}=pJ_q,
$$
a direct block multiplication verifies that
$$
R^{-1}
=
\begin{pmatrix}
\dfrac{1}{\lambda}(I_p-J_p)
&
\dfrac{c}{\Delta}J_{p,q}
\\[3mm]
\dfrac{c}{\Delta}J_{q,p}
&
\dfrac{1}{q(q-1)}(I_q-J_q)
\end{pmatrix}.
$$
In particular, $R$ is nonsingular and since \(I_p-J_p\) and \(I_q-J_q\) have zero diagonal,
$
\operatorname{diag}(R^{-1})=0.
$

Similarly, set
$
S=\lambda I_{p+q}-B^\top B.
$

Since
$$
B^\top B=
\begin{pmatrix}
c^2I_q+pJ_q & cJ_{q,p}\\
cJ_{p,q} & qJ_p
\end{pmatrix},
$$
we have
$$
S=
\begin{pmatrix}
(\lambda-c^2)I_q-pJ_q & -cJ_{q,p}\\
-cJ_{p,q} & \lambda I_p-qJ_p
\end{pmatrix}.
$$
Thus \(S\) is obtained from \(R\) by simultaneously interchanging the two block rows and the two block columns. Hence, for a suitable permutation matrix \(P\),
$$
S=P^\top RP.
$$
It follows that
$
\det S=\det R \ne 0
$
and
$
S^{-1}=P^\top R^{-1}P.
$
Consequently, $S$ is nonsingular and \(S^{-1}\) also has zero diagonal.


Since \(\lambda>0\), the block
\(\sqrt{\lambda}\,I_{p+q}\) is nonsingular. Using the Schur complement \cite[Section~0.8.5]{inter},
$$
\begin{aligned}
\det A
&=
\det\left(\sqrt{\lambda}\,I_{p+q}\right)
\det\left(
\sqrt{\lambda}\,I_{p+q}
-
B^\top
\left(\sqrt{\lambda}\,I_{p+q}\right)^{-1}
B
\right)\\
&=
\det\left(\lambda I_{p+q}-B^\top B\right)\\
&=\det S\\
&\neq 0.
\end{aligned}
$$
Thus $A$ is nonsingular.

By the inverse formula for partitioned matrices
\cite[Section~0.7.3]{inter}, the two diagonal blocks of
$A^{-1}$ are
$$
(A^{-1})[U]=
\left(
\sqrt{\lambda}I_{p+q}
-\frac{1}{\sqrt{\lambda}}BB^\top
\right)^{-1}\
=
\sqrt{\lambda}\,R^{-1}
$$
and
$$
(A^{-1})[V]=
\left(
\sqrt{\lambda}I_{p+q}
-\frac{1}{\sqrt{\lambda}}B^\top B
\right)^{-1}\
=
\sqrt{\lambda}\,S^{-1}.
$$
Since both $R^{-1}$ and $S^{-1}$ have zero diagonal,
$$
(A^{-1})_{ii}=0
\qquad
\text{for every }i\in V(G_{p,q}).
$$
Hence,
$
\det A(i)
=
0
$
for every $i\in V(G_{p,q}).
$
Since $A$ is nonsingular, every vertex of $G_{p,q}$ is a P-vertex
of $A$. Therefore, \(G_{p,q}\) is a Parter graph.

For the final assertion, take $q=2$. Then
$$
m(G_{p,2})=2p-1.
$$
As $p\geq 3$, this gives every odd integer $r\geq 5$.
\end{proof}

\subsection{The Smallest Counterexample}

We now apply the above construction to \(p=3\) and \(q=2\). The resulting graph \(G_{3,2}\) has order
\[
|V(G_{3,2})|=2(p+q)=10
\]
and
\[
m(G_{3,2})=(q-1)(2p-1)=5.
\]
 By Theorem~\ref{thm:general-family}, \(G_{3,2}\) is a Parter graph, but does not have a perfect matching (see Figure~\ref{fig:sharp-example}).

\begin{figure}[h]
\centering
\begin{tikzpicture}[scale=1]

\tikzset{
    vtx/.style={circle, fill=black, inner sep=1.6pt}
}

\node[vtx] (u1) at (0,4) {}; \node[left=6pt] at (u1) {$u_1$};
\node[vtx] (u2) at (0,3) {}; \node[left=6pt] at (u2) {$u_2$};
\node[vtx] (u3) at (0,2) {}; \node[left=6pt] at (u3) {$u_3$};
\node[vtx] (u4) at (0,1) {}; \node[left=6pt] at (u4) {$u_4$};
\node[vtx] (u5) at (0,0) {}; \node[left=6pt] at (u5) {$u_5$};

\node[vtx] (v1) at (3,4) {}; \node[right=6pt] at (v1) {$v_1$};
\node[vtx] (v2) at (3,3) {}; \node[right=6pt] at (v2) {$v_2$};
\node[vtx] (v3) at (3,2) {}; \node[right=6pt] at (v3) {$v_3$};
\node[vtx] (v4) at (3,1) {}; \node[right=6pt] at (v4) {$v_4$};
\node[vtx] (v5) at (3,0) {}; \node[right=6pt] at (v5) {$v_5$};


\draw (u1) -- (v1);
\draw (u1) -- (v2);

\draw (u2) -- (v1);
\draw (u2) -- (v2);

\draw (u3) -- (v1);
\draw (u3) -- (v2);

\draw (u4) -- (v3);
\draw (u4) -- (v4);
\draw (u4) -- (v5);

\draw (u5) -- (v3);
\draw (u5) -- (v4);
\draw (u5) -- (v5);

\draw (u4) -- (v1);
\draw (u5) -- (v2);

\end{tikzpicture}
\caption{The graph \(G_{3,2}\), a counterexample of cycle rank \(5\).}
\label{fig:sharp-example}
\end{figure}

Following the explicit construction in the proof of
Theorem~\ref{thm:general-family}, we have \(\lambda=3\) and \(c=1\).
The corresponding matrix \(A\in S(G_{3,2})\) is a Parter realization of
\(G_{3,2}\), and is given by
$$
A=
\begin{pmatrix}
\sqrt{3}I_5 & B\\[1mm]
B^\top & \sqrt{3}I_5
\end{pmatrix},
 \ 
 \text{where}
\
B=
\begin{pmatrix}
1&1&0&0&0\\
1&1&0&0&0\\
1&1&0&0&0\\
1&0&1&1&1\\
0&1&1&1&1
\end{pmatrix}.
$$

The next proposition identifies \(G_{3,2}\) as the unique smallest counterexample of cycle rank $5$
to the converse of Theorem~\ref{bipartite perfect matching theorem}.

\begin{prop}\label{prop:unique-order10-rank5}
Up to isomorphism, \(G_{3,2}\) is the unique connected balanced bipartite
graph of order \(10\) and cycle rank \(5\) that is a Parter graph but has no
perfect matching.
\end{prop}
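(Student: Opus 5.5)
We begin by pinning down the possible graphs combinatorially, following the counting in the proof of Theorem~\ref{thm:cyclerank-4}. Let \(G\) be connected, balanced bipartite, of order \(10\) and cycle rank \(5\), Parter, and without a perfect matching. By Theorem~\ref{thm:CR}, \(|E(G)|=10-1+5=14\), and by Corollary~\ref{cor:min-order-counterexample}, \(\delta(G)\ge 2\). Take a Hall violator \(U_1\subseteq U\) with \(V_1=N(U_1)\), \(|U_1|=r\), \(|V_1|=k\), \(d=r-k\ge1\), and set \(U_2,V_2\) as before. The same inequality gives
\[
14=|E(G)|\ge 2r+2(5-k)+e(U_2,V_1)=10+2d+e(U_2,V_1).
\]
If \(e(U_2,V_1)=1\), that edge is a bridge, and Proposition~\ref{prop:bridge-property} together with Theorem~\ref{thm:balanced} gives a contradiction exactly as in the proof of Theorem~\ref{thm:cyclerank-4}. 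Hence \(e(U_2,V_1)\ge 2\), which forces \(d=1\).

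Next we fix the sizes. Each vertex of \(U_1\) has at least two neighbours in \(V_1\), so \(k\ge 2\). Each vertex of \(V_2\) has at least two neighbours, all lying in \(U_2\), so \(|U_2|=5-r\ge 2\), i.e.\ \(r\le 3\). With \(r=k+1\) this gives \((r,k)=(3,2)\), so \(|U_1|=|V_2|=3\) and \(|U_2|=|V_1|=2\). Then
\[
e(U_1,V_1)\le 6,\qquad e(U_2,V_2)\le 6,
\]
while the degree conditions force \(e(U_1,V_1)\ge 2r=6\) and \(e(U_2,V_2)\ge 2\cdot 3=6\). So both pairs induce complete bipartite graphs \(K_{3,2}\) and \(K_{2,3}\), and \(e(U_2,V_1)=14-12=2\). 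Up to isomorphism the two cross edges fall into three configurations:
\begin{enumerate}
\item[(a)] a matching \(x_1v_1,x_2v_2\), which is exactly \(G_{3,2}\);
\item[(b)] one vertex \(x\in U_2\) joined to both \(v_1,v_2\);
\item[(c)] both \(x_1,x_2\) joined to one vertex \(v\in V_1\).
\end{enumerate}
Theorem~\ref{thm:general-family} shows that (a) is Parter, so it remains to show that (b) and (c) are not.

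In both (b) and (c) there is a cut vertex: \(x\) in case (b), \(v\) in case (c). By the symmetry \(U\leftrightarrow V\), \(U_1\leftrightarrow V_2\), it suffices to treat case (c). Removing \(v\) leaves the star \(K_{1,3}\), centred at the other vertex \(v'\in V_1\) with leaves \(U_1\), together with \(K_{2,3}\) on \(U_2\cup V_2\). Suppose \(A\in S(G)\) is a Parter realization, so \(A\) is nonsingular with \(\operatorname{diag}(A^{-1})=0\). I would try the following Laplace/Schur approach, modelled on the proof of Proposition~\ref{prop:bridge-property}. Expand \(\det A\) and \(\det A(z)\) along the row and column of the cut vertex \(v\). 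This expresses each of them through \(\det\) and first- and second-order principal minors of the blocks \(A_1\) (on the star \(\cup\) leaves) and \(A_2\) (on \(K_{2,3}\)). The conditions \(\det A(z)=0\) for \(z\in U_1\cup\{v'\}\) should then force either that the star block admits a Parter-type realization, or that the star minus one vertex does. That is impossible: \(K_{1,3}\) and \(K_{1,3}-v'\) (three isolated vertices) are unbalanced or have isolated vertices, so Theorem~\ref{thm:balanced} and Lemma~\ref{lem:isolated_vertex} rule them out. An alternative, more hands-on route is to use the pendant-like structure directly. The three leaves \(u\in U_1\) all have the same neighbourhood \(\{v,v'\}\), so in any \(A\) the corresponding rows restricted to the columns of \(v,v'\) span at most a \(2\)-dimensional space. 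Writing out \(\det A(u)=0\) for all three \(u\) should then yield a linear dependence contradicting \(\det A\neq0\).

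The main obstacle is this last step. The cut vertex case is not literally covered by Proposition~\ref{prop:bridge-property}, so a tailored determinant expansion (or rank argument on the three twin vertices of \(U_1\)) has to be carried out carefully. I would attempt the twin-vertex rank argument first, since it uses only the \(3\times 2\) biadjacency block of \(U_1\) and looks shortest.
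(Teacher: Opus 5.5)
Your combinatorial reduction is correct and coincides with the paper's. The condition $\delta(G)\ge2$ forces $|U_1|=|V_2|=3$ and $|U_2|=|V_1|=2$, and both blocks are complete. The two remaining edges are then either independent, giving $G_{3,2}$ (which is Parter by Theorem~\ref{thm:general-family}), or they share an endpoint, your cases (b) and (c) being isomorphic. (The preliminary bridge argument giving $d=1$ is valid but unnecessary, since $k\ge2$, $r\le3$ and $r>k$ already force $(r,k)=(3,2)$.)

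The gap is the one step with real content: you never show that configuration (c) is not a Parter graph. The route you say you would try first cannot work. In $G_{3,2}$ the three vertices of $U_1$ are also twins with two-element common neighbourhood $V_1$, and the explicit realization in Section~\ref{sec:sharpness} makes all of them P-vertices with $\det A\neq0$. Hence no argument that uses only the $3\times2$ block of $U_1$ can produce the linear dependence you hope for. Something specific to (c), namely the cut vertex, has to enter.

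The cut-vertex expansion is the right idea; it is the paper's argument, run at $u_4$ in the isomorphic configuration (b). But your sketch leaves out its decisive input and predicts the wrong dichotomy. You only invoke $\det A(z)=0$ for $z\in U_1\cup\{v'\}$, whereas the key condition is at $v$ itself. Put $v$ first, let $B\in S(K_{1,3})$ be the block on $\{v'\}\cup U_1$ and $C\in S(K_{2,3})$ the block on $U_2\cup V_2$, and let $x,y$ be the corresponding parts of the row of $v$. Then $A(v)=B\oplus C$, so $\det B\det C=0$.

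\begin{itemize}
\item If $\det B=0$, the Schur complement with respect to $C$ and the bordered determinant identity give $\det A=-\det C\,x^\top\operatorname{adj}(B)x\neq0$. The same expansion gives $\det A(z)=-\det C(z)\,x^\top\operatorname{adj}(B)x$ for every $z\in U_2\cup V_2$, so $C$ is a Parter realization of $K_{2,3}$.
\item Symmetrically, if $\det C=0$, then $B$ is a Parter realization of $K_{1,3}$.
\end{itemize}

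Either way Theorem~\ref{thm:balanced} is contradicted. So the correct dichotomy is that one of the two components of $G-v$ is Parter, not \emph{the star or the star minus a vertex}.

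The star conditions alone are in fact insufficient. Take weight $1$ on every edge of $G[\{v,v'\}\cup U_1]$, diagonal entries $1$ at $v'$ and $6$ on $U_1$, any nonsingular $C\in S(K_{2,3})$, any $y$ supported exactly on $U_2$, and diagonal entry $\tfrac12+y^\top C^{-1}y$ at $v$. Then $\det A=-54\det C\neq0$ and $\det A(z)=0$ for all $z\in U_1\cup\{v'\}$, even though $B$ and $C$ are both nonsingular.
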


\begin{proof}
Let $G$ be a connected balanced bipartite graph of order $10$ and
cycle rank $5$ that is a Parter graph but has no perfect matching.
Let $(U,V)$ be its bipartition, where
$
|U|=|V|=5.
$
By \corref{cor:min-order-counterexample}, $10$ is the
minimum possible order of such a graph, and any such graph of order \(10\) satisfies
\(
\delta(G)\geq2.
\)
Since $G$ has no perfect matching, Hall's theorem gives a set
$U_1\subseteq U$ satisfying
$
|N(U_1)|<|U_1|.
$

Set
$$
V_1=N(U_1),
\qquad
U_2=U\setminus U_1,
\qquad
V_2=V\setminus V_1.
$$

\noindent
Since $\delta(G)\geq2$, we have
$
|V_1|\geq2,
$
and consequently
$
|U_1|\geq3.
$

\noindent
On the other hand, there are no edges between $U_1$ and $V_2$.
Thus every vertex of $V_2$ has all its neighbors in $U_2$. Again,
since $\delta(G)\geq2$,
$
|U_2|\geq2.
$
Since $|U|=5$, this gives
$
|U_1|\leq3.
$
Therefore,
$
|U_1|=3.
$
Since
$$
2\leq|V_1|<|U_1|=3,
$$
we obtain
$
|V_1|=2.
$
Consequently,
$
|U_2|=2$
 and $|V_2|=3.
$

Each vertex of $U_1$ has degree at least $2$, and all its neighbors
belong to the two-element set $V_1$. Hence every vertex of $U_1$ is
adjacent to both vertices of $V_1$. Therefore,
$
G[U_1,V_1]\cong K_{3,2}.
$
Similarly, every vertex of $V_2$ has degree at least $2$, and all its
neighbors belong to the two-element set $U_2$. Hence
$
G[U_2,V_2]\cong K_{2,3}.
$

Since $G$ is connected and has cycle rank $5$,
$$
|E(G)|
=
m(G)+|V(G)|-1
=
5+10-1
=
14.
$$
Thus, besides the \(12\) edges within these two complete bipartite
subgraphs, there are exactly two additional edges. Since there are no
edges between \(U_1\) and \(V_2\), these two edges must join \(U_2\)
to \(V_1\).
Up to isomorphism, there are only two possibilities for these two
edges: either they are independent, or they have a common endpoint.
We consider these two cases separately.

In the first case, after relabelling the vertices, the two edges may
be taken to be $u_4v_1$ and $u_5v_2.$
This is precisely the graph \(G_{3,2}\).

It remains to rule out the second case, illustrated in Figure~\ref{fig:second-case}. Up to relabelling and
interchanging the two partite sets, suppose that the two additional
edges are $u_4v_1$ and $u_4v_2.$
Removing $u_4$ leaves exactly two
components,
$
K_{3,2}$
and
$K_{1,3}.
$

\begin{figure}[h]
\centering
\begin{tikzpicture}[scale=1]

\tikzset{
    vtx/.style={circle, fill=black, inner sep=1.6pt}
}

\node[vtx] (u1) at (0,4) {}; \node[left=6pt] at (u1) {$u_1$};
\node[vtx] (u2) at (0,3) {}; \node[left=6pt] at (u2) {$u_2$};
\node[vtx] (u3) at (0,2) {}; \node[left=6pt] at (u3) {$u_3$};
\node[vtx] (u4) at (0,1) {}; \node[left=6pt] at (u4) {$u_4$};
\node[vtx] (u5) at (0,0) {}; \node[left=6pt] at (u5) {$u_5$};

\node[vtx] (v1) at (3,4) {}; \node[right=6pt] at (v1) {$v_1$};
\node[vtx] (v2) at (3,3) {}; \node[right=6pt] at (v2) {$v_2$};
\node[vtx] (v3) at (3,2) {}; \node[right=6pt] at (v3) {$v_3$};
\node[vtx] (v4) at (3,1) {}; \node[right=6pt] at (v4) {$v_4$};
\node[vtx] (v5) at (3,0) {}; \node[right=6pt] at (v5) {$v_5$};


\draw (u1) -- (v1);
\draw (u1) -- (v2);

\draw (u2) -- (v1);
\draw (u2) -- (v2);

\draw (u3) -- (v1);
\draw (u3) -- (v2);

\draw (u4) -- (v3);
\draw (u4) -- (v4);
\draw (u4) -- (v5);

\draw (u5) -- (v3);
\draw (u5) -- (v4);
\draw (u5) -- (v5);

\draw (u4) -- (v1);
\draw (u4) -- (v2);

\end{tikzpicture}
\caption{The second possible configuration for the additional edges.}
\label{fig:second-case}
\end{figure}

Suppose, for a contradiction, that $G$ is a Parter graph. Let $A\in S(G)$ be a nonsingular matrix for which every vertex is a P-vertex. With respect to the decomposition
at the cut vertex $u_4$, write
$$
A=
\begin{pmatrix}
a&x^\top&y^\top\\
x&B&0\\
y&0&C
\end{pmatrix},
$$
where the first row and column correspond to $u_4$, $B\in S(K_{3,2})$, and $C\in S(K_{1,3})$.

Since $u_4$ is a P-vertex and $A$ is nonsingular,
$
\operatorname{nullity} A(u_4)=1.
$
But
$
A(u_4)=B\oplus C.
$
Hence exactly one of $B$ and $C$ is singular, and the other is
nonsingular.

Suppose first that $B$ is singular and $C$ is nonsingular. 
Applying the Schur complement with respect to
\(C\) gives
$$
\det A
=
\det(C)
\det
\begin{pmatrix}
\alpha & x^\top\\
x&B
\end{pmatrix},
$$
where \(\alpha = a - y^\top C^{-1}y\).
Using the bordered determinant identity (see, e.g., \cite[Chapter~0, p.~26]{inter}),
$$
\det
\begin{pmatrix}
\alpha&x^\top\\
x&B
\end{pmatrix}
=
\alpha\det B-x^\top\operatorname{adj}(B)x,
$$

and since \(B\) is singular, we obtain
$$
\det A
=
-\det(C)\,
x^\top\operatorname{adj}(B)x.
$$

Since $A$ and $C$ are nonsingular,
$
x^\top\operatorname{adj}(B)x\neq0.
$

For any vertex $z$ of the component corresponding to $C$,
$$
\det A(z)
=
-\det C(z)\,
x^\top\operatorname{adj}(B)x.
$$
Since every vertex is a P-vertex,
$\det A(z)=0,$
and therefore $\det C(z)=0$
for every vertex $z$ of $K_{1,3}$. Thus, the nonsingular matrix $C$
is a Parter realization of $K_{1,3}$, implying that $K_{1,3}$ is a Parter graph.
This is impossible by Theorem~\ref{thm:balanced}, since
$K_{1,3}$ is not balanced. 

If instead $C$ is singular and $B$ is nonsingular, the same argument
shows that $B$ would be a Parter realization of $K_{3,2}$,
which is again impossible by
Theorem~\ref{thm:balanced}.

Thus, the two additional edges cannot have a common endpoint.
Consequently, they must be independent. Hence, up to isomorphism,
$G$ is precisely the graph \(G_{3,2}\).
\end{proof}

\subsection{Counterexamples at Arbitrary Cycle Rank \(\ge5\)}

We first record the following observation.

\begin{obs}\label{obs:bridge-cycle-rank}
Let $G_1$ and $G_2$ be disjoint graphs, and let $G$ be obtained by
joining a vertex of $G_1$ to a vertex of $G_2$ by a single edge. Then
$$
m(G)=m(G_1)+m(G_2),
$$
which follows immediately from the cycle rank formula
$m(G)=|E(G)|-|V(G)|+k(G)$.
\end{obs}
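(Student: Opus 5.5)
Observation~\ref{obs:bridge-cycle-rank} follows from Theorem~\ref{thm:CR}; we only need to count vertices, edges, and components of $G$ in terms of those of $G_1$ and $G_2$. Write $n_i=|V(G_i)|$, $e_i=|E(G_i)|$ and $k_i=k(G_i)$ for $i=1,2$.

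\textbf{Vertices and edges.} Since $G_1$ and $G_2$ are disjoint and $G$ is formed by adding exactly one new edge $xy$ with $x\in V(G_1)$ and $y\in V(G_2)$, we have
\[
|V(G)|=n_1+n_2,\qquad |E(G)|=e_1+e_2+1 .
\]

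\textbf{Components.} The new edge joins the component of $G_1$ containing $x$ to the component of $G_2$ containing $y$. These are distinct components of the disjoint union $G_1\cup G_2$, so merging them gives
\[
k(G)=k_1+k_2-1 .
\]
All other components are unchanged. This is the only point that needs care, and it is immediate because $x$ and $y$ lie in different graphs, so no cycle is created.

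\textbf{Conclusion.} Substituting into Theorem~\ref{thm:CR},
\[
m(G)=(e_1+e_2+1)-(n_1+n_2)+(k_1+k_2-1)=(e_1-n_1+k_1)+(e_2-n_2+k_2)=m(G_1)+m(G_2).
\]
The $+1$ from the extra edge cancels the $-1$ from the merged components. This matches the fact that a bridge lies on no cycle, so the cycle space of $G$ is the direct sum of those of $G_1$ and $G_2$. I do not expect any real obstacle here.
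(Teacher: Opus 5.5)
Your proof is correct and follows the paper's argument exactly. The paper simply invokes $m(G)=|E(G)|-|V(G)|+k(G)$; you supply the explicit counts $|V(G)|=n_1+n_2$, $|E(G)|=e_1+e_2+1$ and $k(G)=k_1+k_2-1$ that make the claim immediate, and your count also covers disconnected $G_1,G_2$.
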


\begin{lem}\label{lem:prescribed-cycle-rank-pm}
For every integer $s\ge0$, there exists a connected balanced bipartite
graph $H_s$ with a perfect matching and cycle rank $m(H_s)=s$.
\end{lem}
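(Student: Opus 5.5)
We prove the lemma by an explicit construction in each cycle rank. For \(s=0\), take \(H_0=K_2\), a single edge. It is connected, balanced and bipartite, it is its own perfect matching, and \(m(K_2)=1-2+1=0\) by Theorem~\ref{thm:CR}.

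For \(s\ge1\), the natural candidate is a ladder-type graph: the Cartesian product \(P_{s+1}\square K_2\), i.e.\ the ladder with \(s+1\) rungs. Concretely, its vertices are \(a_1,\dots,a_{s+1},b_1,\dots,b_{s+1}\) and its edges are
\[
a_ia_{i+1},\quad b_ib_{i+1}\ (1\le i\le s),\qquad a_ib_i\ (1\le i\le s+1).
\]
We then verify the required properties in turn.

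\begin{enumerate}
\item \emph{Bipartite and balanced.} Colour \(a_i\) by the parity of \(i\) and \(b_i\) by the opposite parity. Every edge joins vertices of different colours, so the graph is bipartite. Each rung \(\{a_i,b_i\}\) contributes one vertex to each colour class, so the two classes have size \(s+1\) each.
\item \emph{Connected.} The path \(a_1\cdots a_{s+1}\) together with the rungs reaches every vertex.
\item \emph{Perfect matching.} The rungs \(\{a_ib_i\}_{i=1}^{s+1}\) form a perfect matching.
\item \emph{Cycle rank.} The graph has \(2(s+1)\) vertices and \(2s+(s+1)=3s+1\) edges. Theorem~\ref{thm:CR} gives \(m=3s+1-2(s+1)+1=s\).
\end{enumerate}

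The only point needing care is the edge count and the bipartite colouring; both are routine.

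An alternative route starts from \(C_4\), which has a perfect matching and cycle rank \(1\). One chains \(s\) copies of \(C_4\) by single edges and uses Observation~\ref{obs:bridge-cycle-rank} to get additivity of the cycle rank. The union of the matchings of the copies is a perfect matching of the chain, and joining vertices of bipartite graphs by an edge preserves bipartiteness after swapping classes if needed. The ladder is simpler, so we take it as the primary argument.
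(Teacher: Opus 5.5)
Your proof is correct, but your primary construction differs from the paper's. The paper takes $s$ disjoint copies of $C_4$ and joins consecutive copies by bridges whose endpoints lie in opposite partite sets. It gets the perfect matching as the union of matchings of the copies, and the cycle rank $s$ from the additivity in Observation~\ref{obs:bridge-cycle-rank}. This is essentially your ``alternative route,'' and it fits the bridge-gluing approach used immediately afterwards in Corollary~\ref{cor:all-cycle-ranks}.

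Your ladder $P_{s+1}\square K_2$ instead settles everything with one edge count via Theorem~\ref{thm:CR}, an explicit $2$-colouring, and the rungs as a perfect matching. It needs no bookkeeping about which partite classes the bridge endpoints lie in. It also covers $s=0$ within the same family, since $P_1\square K_2=K_2$, and uses $2s+2$ vertices rather than $4s$.

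The ladder is bridgeless while the paper's chain is not, but this does not matter. Corollary~\ref{cor:all-cycle-ranks} only uses that $H_{r-5}$ is connected, balanced, bipartite, has a perfect matching, and has cycle rank $r-5$, so either graph works there.
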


\begin{proof}
For $s=0$, take $H_0\cong K_2$. For $s\ge1$, take $s$ disjoint copies
of $C_4$ and join consecutive copies by bridges, choosing the endpoints of each bridge from opposite partite sets. The resulting graph $H_s$ is
connected, balanced, and bipartite. Moreover, the union of perfect
matchings in the individual copies of $C_4$ is a perfect matching of
$H_s$. Since $m(C_4)=1$, repeated application of
Observation~\ref{obs:bridge-cycle-rank} gives
$
m(H_s)=s.
$
\end{proof}

We now apply the observation to the smallest counterexample \(G_{3,2}\).

\begin{cor}\label{cor:all-cycle-ranks}
For every integer \(r\ge5\), there exists a connected balanced bipartite
graph \(G\) with
\(
m(G)=r
\)
such that \(G\) is a Parter graph but does not have a perfect
matching.
\end{cor}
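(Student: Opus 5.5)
The plan is to combine the smallest counterexample \(G_{3,2}\) with the graphs \(H_s\) of Lemma~\ref{lem:prescribed-cycle-rank-pm} by a single bridge. Given \(r\ge5\), set \(s=r-5\ge0\) and let \(H_s\) be the connected balanced bipartite graph with a perfect matching and \(m(H_s)=s\) supplied by Lemma~\ref{lem:prescribed-cycle-rank-pm}. By Theorem~\ref{bipartite perfect matching theorem}, \(H_s\) is a connected Parter graph, and by Theorem~\ref{thm:general-family}, so is \(G_{3,2}\). Form \(G=G_{3,2}\cdot H_s\) by adding one edge joining a vertex of \(G_{3,2}\) to a vertex of \(H_s\). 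Theorem~\ref{thm:bridge-preservation} then shows that \(G\) is a Parter graph.

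Next we verify the graph-theoretic properties of \(G\). It is connected, since both pieces are connected. It is bipartite: we choose the bridge endpoints so that the bridge joins opposite colour classes. This is always possible by swapping the roles of the two partite sets of \(H_s\), and in fact any single edge between two bipartite graphs can be made consistent this way. It is balanced, since each piece is balanced and the partite sets simply add. By Observation~\ref{obs:bridge-cycle-rank}, \(m(G)=m(G_{3,2})+m(H_s)=5+(r-5)=r\).

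The only real point is to show that \(G\) has no perfect matching. We use Hall's condition directly. In \(G_{3,2}\), the set \(U_1\) satisfies \(N(U_1)=V_1\) with \(|V_1|=2<3=|U_1|\). To preserve this deficiency we place the bridge at a vertex outside \(U_1\); for instance, we attach it at a vertex of \(V_2\), or of \(U_2\). Then \(N_G(U_1)=V_1\) still holds, so Hall's condition (Theorem~\ref{thm:hall}) fails for the side containing \(U_1\) in the balanced bipartite graph \(G\), and \(G\) has no perfect matching.

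Alternatively, a parity argument works without any care in choosing the bridge. Suppose \(G\) had a perfect matching \(M\). If the bridge were not in \(M\), then \(M\) would restrict to a perfect matching of \(G_{3,2}\), which is impossible. If the bridge were in \(M\), then \(H_s\) minus one vertex would have a perfect matching, which is impossible since that graph has odd order. This alternative is the cleaner route, and it is the step requiring the most attention; everything else follows directly from the cited results.
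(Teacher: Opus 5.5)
Your proposal is correct and follows the paper's proof. You join $G_{3,2}$ to $H_{r-5}$ by a bridge, obtain the Parter property from Theorem~\ref{bipartite perfect matching theorem} and Theorem~\ref{thm:bridge-preservation}, get the cycle rank from Observation~\ref{obs:bridge-cycle-rank}, and exclude a perfect matching by the same argument on whether the bridge lies in the matching; the differences are cosmetic: for $r=5$ you use $G_{3,2}\cdot K_2$ rather than $G_{3,2}$ itself, you apply the odd-order count to $H_s$ rather than to $G_{3,2}$, and your Hall-deficiency alternative (bridge attached outside $U_1$, so $N_G(U_1)=V_1$) is also valid.
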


\begin{proof}
By Theorem~\ref{thm:general-family}, the graph
$
B:=G_{3,2}
$
is a connected balanced bipartite Parter graph with no perfect
matching and
$
m(B)=5.
$
Thus, the case $r=5$ follows immediately.

Now let $r\ge6$, and let $H_{r-5}$ be the graph given by
Lemma~\ref{lem:prescribed-cycle-rank-pm}. Form $G$ from the disjoint
union of $B$ and $H_{r-5}$ by adding a single bridge between
vertices in opposite partite sets. Then $G$ is connected, balanced, and bipartite. Since
$H_{r-5}$ has a perfect matching, it is a Parter graph by
Theorem~\ref{bipartite perfect matching theorem}. Thus, both \(B\) and
\(H_{r-5}\) are Parter graphs. By
Theorem~\ref{thm:bridge-preservation}, \(G\) is also
a Parter graph. Moreover, by Observation~\ref{obs:bridge-cycle-rank}, 
$$
m(G)
=
m(B)+m(H_{r-5})
=
5+(r-5)
=
r.
$$
It remains to show that $G$ has no perfect matching. Let $e$ denote
the bridge between $B$ and $H_{r-5}$, and suppose that $M$ is a perfect
matching of $G$. If $e\notin M$, then the restriction of $M$ to $B$
is a perfect matching of $B$, a contradiction. If $e\in M$, then one
vertex of $B$ is matched across the bridge, leaving the remaining nine
vertices of $B$ to be matched among themselves, which is impossible.
Thus $G$ has no perfect matching.
\end{proof}

The construction in Corollary~\ref{cor:all-cycle-ranks} is illustrated in
Figure~\ref{fig:arbitrary-cycle-rank}.

\begin{figure}[ht]
\centering
\begin{tikzpicture}[scale=0.78]

\node[
    circle,
    draw,
    minimum size=2.0cm
] (B) at (0,0) {$B=G_{3,2}$};

\node[circle, fill, inner sep=1.5pt] (a1) at (2.0,0) {};
\node[circle, fill, inner sep=1.5pt] (a2) at (2.8,0.8) {};
\node[circle, fill, inner sep=1.5pt] (a3) at (3.6,0) {};
\node[circle, fill, inner sep=1.5pt] (a4) at (2.8,-0.8) {};

\draw (a1)--(a2)--(a3)--(a4)--(a1);

\draw (B.east)--(a1);

\node[circle, fill, inner sep=1.5pt] (b1) at (4.9,0) {};
\node[circle, fill, inner sep=1.5pt] (b2) at (5.7,0.8) {};
\node[circle, fill, inner sep=1.5pt] (b3) at (6.5,0) {};
\node[circle, fill, inner sep=1.5pt] (b4) at (5.7,-0.8) {};

\draw (b1)--(b2)--(b3)--(b4)--(b1);

\draw (a3)--(b1);

\node at (7.65,0) {$\cdots$};

\node[circle, fill, inner sep=1.5pt] (c1) at (8.9,0) {};
\node[circle, fill, inner sep=1.5pt] (c2) at (9.7,0.8) {};
\node[circle, fill, inner sep=1.5pt] (c3) at (10.5,0) {};
\node[circle, fill, inner sep=1.5pt] (c4) at (9.7,-0.8) {};

\draw (c1)--(c2)--(c3)--(c4)--(c1);

\draw (8.0,0)--(c1);

\draw[
    decorate,
    decoration={brace, amplitude=5pt, mirror}
]
(1.8,-1.15) -- (10.7,-1.15)
node[midway,below=6pt] {$r-5\text{ copies of }C_4$};

\end{tikzpicture}
\caption{A counterexample with cycle rank \(r\ge5\), obtained by joining
\(B=G_{3,2}\) to \(r-5\) copies of \(C_4\) by bridges.}
\label{fig:arbitrary-cycle-rank}
\end{figure}

\section{Concluding Remarks}

We have established \(4\) as the sharp cycle rank threshold for the
equivalence between bipartite Parter graphs and perfect matchings. A
natural direction for future work is to further understand the structure
of bipartite Parter graphs of higher cycle rank, particularly those that
do not have a perfect matching.

\section*{Declaration of competing interest}
There is no competing interest.

\section*{Acknowledgements}

\noindent
The authors would like to thank Dr. Nishad Kothari for his valuable suggestions. The first author's research was supported by the Anusandhan National Research Foundation (ANRF), Government of India, under MATRICS Grant ANRF/ARGM/2025 /002777/MTR. The second author's research was supported by the National Board for Higher Mathematics (NBHM), Government of India, under the Ph.D. Scholarship Grant 0203/8(28)/2023-R\&D-II.

\end{document}